\newcommand{\blue}[1]{{\color{blue} #1}}
\newcommand{\red}[1]{{\color{red} #1}}
\newcommand{\green}[1]{{\color{green} #1}}
\newcommand{\p}{{p}}
\newcommand{\pbf}{{\bf p}}
\newcommand{\h}{{\tt h}}
\newcommand{\q}{{\tt q}}
\newcommand{\pe}{{\tt p}}
\newcommand{\bi}{{\tt b}}
\newcommand{\ta}{{\tt a}}
\newcommand{\x}{{\tt x}}
\newcommand{\y}{{\tt y}}
\newcommand{\s}{{\tt s}}
\newcommand{\z}{{\tt z}}
\newcommand{\zbf}{{\bf z}}
\newcommand{\w}{{\tt w}}
\newcommand{\N}{\mathbb{N}}
\newcommand{\bigB}{\mathbb{B}}
\newcommand{\Z}{\mathbb{Z}}
\newcommand{\Q}{\mathbb{Q}}
\newcommand{\R}{\mathbb{R}}
\newcommand{\C}{\mathbb{C}}
\newcommand{\D}{\delta}
\newcommand{\G}{\mathbb{G}}
\newcommand{\spann}{\mathbb{span}}
\newcommand{\lin}{\mathbb{L}}
\newcommand{\codim}{\text{codim}}
\newcommand{\diver}{\text{div}}
\newcommand{\tr}{\text{tr}}
\newcommand{\ol}{\overline{\lambda}}
\newcommand{\eps}{\varepsilon}
\newcommand{\slk}{\Sigma_{\lambda_{k}}}
\newcommand{\uol}{u_{\overline{\lambda}}}
\newcommand{\sol}{\Sigma_{\overline{\lambda}}}
\newcommand{\ulk}{u_{\lambda_{k}}}
\newcommand{\ul}{\underline{\lambda}}
\newcommand{\uul}{u_{\underline{\lambda}}}
\newcommand{\stul}{\tilde{\Sigma_{\underline{\lambda}}}}
\newcommand{\stlk}{\tilde{\Sigma_{\lambda_{k}}}}
\DeclareMathOperator{\sgn}{sign}
\newcommand{\A}{Area}
\newcommand{\ve}{\varepsilon}
\renewcommand{\theequation}{\thesection.\arabic{equation}}
\theoremstyle{definition}
\newtheorem{definition}{Definition}[section]
\theoremstyle{remark}
\newtheorem{remark}[definition]{Remark}
\theoremstyle{plain}
\newtheorem{lemma}[definition]{Lemma}
\newtheorem{theorem}[definition]{Theorem}
\newtheorem{proposition}[definition]{Proposition}
\newtheorem{corollary}[definition]{Corollary}
\newcommand{\bremark}{\begin{remark} \em}
\newcommand{\eremark}{\end{remark} }
\newcommand{\beq}{\begin{equation}}
\newcommand{\eeq}{\end{equation}}
\def\R {\mathbb{R}}
\def\C {\mathcal{C}}
\def\D {\cD}
\def\N {\mathbb{N}}
\def\S {\mathbb{S}}
\def\Z {\mathbb{Z}}
\def\Sym {{\rm Sym}}
\def\from {\colon}
\def\bv {\mathbf{v}}
\def\bu {\mathbf{u}}
\def\bw {\mathbf{w}}
\def\bW {\mathbf{W}}
\def\bc {\mathbf{c}}
\def\dist{{\rm dist}}
\def\cat{\textrm{cat}}
\def\gen{\textrm{genus}}
\newcommand{\cA}{{\mathcal A}}
\newcommand{\cB}{{\mathcal B}}
\newcommand{\cC}{{\mathcal C}}
\newcommand{\cD}{{\mathcal D}}
\newcommand{\cE}{{\mathcal E}}
\newcommand{\cF}{{\mathcal F}}
\newcommand{\cG}{{\mathcal G}}
\newcommand{\cH}{{\mathcal H}}
\newcommand{\cI}{{\mathcal I}}
\newcommand{\cJ}{{\mathcal J}}
\newcommand{\cK}{{\mathcal K}}
\newcommand{\cL}{{\mathcal L}}
\newcommand{\cM}{{\mathcal M}}
\newcommand{\cN}{{\mathcal N}}
\newcommand{\cO}{{\mathcal O}}
\newcommand{\cP}{{\mathcal P}}
\newcommand{\cQ}{{\mathcal Q}}
\newcommand{\cR}{{\mathcal R}}
\newcommand{\cS}{{\mathcal S}}
\newcommand{\cT}{{\mathcal T}}
\newcommand{\cU}{{\mathcal U}}
\newcommand{\cV}{{\mathcal V}}
\newcommand{\cW}{{\mathcal W}}
\newcommand{\cX}{{\mathcal X}}
\newcommand{\cY}{{\mathcal Y}}
\newcommand{\cZ}{{\mathcal Z}}
\newcommand{\al}{\alpha}
\newcommand{\be}{\beta}
\newcommand{\ga}{\gamma}
\newcommand{\de}{\delta}
\newcommand{\la}{\lambda}
\newcommand{\si}{\sigma}
\newcommand{\om}{\omega}
\newcommand{\De}{\Delta}
\newcommand{\Ga}{\Gamma}
\newcommand{\La}{\Lambda}
\newcommand{\Om}{\Omega}
\newcommand{\Si}{\Sigma}
\newcommand{\Sext}{\mathcal{S}_\mathrm{ext}}
\newcommand{\loc}{\mathrm{loc}}
\newcommand{\Lip}{\mathrm{Lip}}
\renewcommand{\div}{\mathrm{div}}
\newcommand{\problem}[1] {(P)_{#1}}
\newcommand{\weakto}{\rightharpoonup}
\newcommand{\textred}[1]{\textbf{\color{red} #1}}
\newcommand{\ddfrac}[2] {\frac{\displaystyle #1 }{\displaystyle #2} }
\newcommand{\pa}{\partial}
\newcommand{\mf}[1]{\mathbf{#1}}
\newcommand{\bs}[1]{\boldsymbol{#1}}
\newcommand{\wt}{\widetilde}
\newcommand{\bruttateta}{\min\left(1,\frac{2}{N}\right)}
\DeclareMathOperator*{\pv}{pv}
\DeclareMathOperator*{\tsum}{\textstyle{\sum}}
\DeclareMathOperator{\interior}{int\,}
\DeclareMathOperator{\conv}{conv}
\DeclareMathOperator{\supp}{supp}
\DeclareMathOperator*{\osc}{osc}
\DeclareMathOperator*{\meas}{meas}
\DeclareMathOperator{\rad}{rad}
\title[Normalized solutions of mass supercritical Schr\"odinger equations]{Normalized solutions of mass supercritical Schr\"odinger equations with potential}
\author{Thomas Bartsch, Riccardo Molle, Matteo Rizzi and Gianmaria Verzini}
\address{
\hbox{\parbox{5.7in}{\medskip\noindent
Thomas Bartsch\\
Mathematisches Institut, Justus-Liebig-Universit\"at Giessen, \\
Arndtstrasse 2, 35392 Giessen (Germany).\\[2pt]
{\em{E-mail address: }}{\tt Thomas.Bartsch@math.uni-giessen.de.} \\ [5pt]
Riccardo Molle\\
Dipartimento di Roma, Universit\`a di Roma ``Tor Vergata'', \\
Via della Ricerca Scientifica n. 1, 00133 Roma (Italy). \\[2pt]
{\em{E-mail address: }}{\tt molle@mat.uniroma2.it} \\ [5pt]
Matteo Rizzi\\
Mathematisches Institut, Justus-Liebig-Universit\"at Giessen, \\
Arndtstrasse 2, 35392 Giessen (Germany).\\[2pt]
{\em{E-mail address: }}{\tt mrizzi1988@gmail.com.} \\ [5pt]
Gianmaria Verzini\\
Dipartimento di Matematica, Politecnico di Milano, \\
Piazza Leonardo da Vinci, 32, 20133 Milano (Italy). \\[2pt]
{\em{E-mail address: }}{\tt gianmaria.verzini@polimi.it}}}}
\keywords{Nonlinear Schr\"odinger equations, normalized solution, min-max methods.}
\subjclass[2020]{35J50, 35J15, 35J60.}
\thanks{\emph{Acknowledgements.} This work was supported by the MIUR Excellence Department Project CUP E83C18000100006 (Roma Tor Vergata University) and by the INdAM-GNAMPA group. M.R. supported by the Alexander von Humboldt foundation.}
\begin{document}

\begin{abstract}
This paper is concerned with the existence of normalized solutions of the nonlinear Schr\"odinger equation
\[
  -\De u+V(x)u+\la u = |u|^{p-2}u \qquad\text{in $\R^N$}
\]
in the mass supercritical and Sobolev subcritical case $2+\frac4N<p<2^*$. We prove the existence of a solution $(u,\la)\in H^1(\R^N)\times\R^+$ with prescribed $L^2$-norm $\|u\|_2=\rho$ under various conditions on the potential $V:\R^N\to\R$, positive and vanishing at infinity, including potentials with singularities. The proof is based on a new min-max argument.
\end{abstract}

\maketitle

\section{Introduction}
We look for solutions $(u,\la)\in H^1(\R^N)\times\R^+$ of the problem
\begin{equation}\label{main-eq}
\left\{
\begin{aligned}
  -\Delta u+V(x)u+\la u&=|u|^{p-2}u\qquad\text{in $\R^N$}\\
  u\ge0,\ \int_{\R^N}u^2\,dx&=\rho^2
\end{aligned}
\right.
\end{equation}
where $V$ is a fixed potential and $2+\frac{4}{N}<p<2^*=\frac{2N}{N-2}$ ($2^* = +\infty$ if $N=1,2$), i.e.\ $p$ is mass supercritical and Sobolev subcritical. Here $\la$ is a Lagrange multiplier, which appears due to the mass constraint $\|u\|_2=\rho$. Solutions with prescribed $L^2$-norm are known as \textit{normalized solutions}. We consider the case $V\ge0$, $V(x)\to0$ as $|x|\to\infty$ and allow that $V$ has singularities. This equation comes from physics, in fact its solutions are stationary waves for nonlinear Schr\"{o}dinger or Klein-Gordon equations. Recall that solutions of the time-dependent nonlinear Schr\"odinger equation 
\[
  i\pa_t\Phi+\De\Phi-V(x)\Phi+|\Phi|^{p-2}\Phi = 0 \qquad x\in\R^N,\ t>0,
\]
preserve the $L^2$ norm, hence it makes sense to prescribe the mass and not the frequency of standing wave solution $\Phi(x,t)=e^{i\la t}u(x)$. For further physical motivations we refer to \cite{BL}. Normalized solutions of semilinear elliptic equations and systems are also of interest in the framework of ergodic Mean Field Games systems; see \cite{CV,PPVV} and the references therein.

In spite of the importance of normalized solutions from the physical point of view, surprisingly little is known about the existence (or non-existence) of normalized solutions compared with the problem where $\la$ is prescribed instead of $\|u\|_2$. In the case $2<p<2+\frac4N$ one can try to minimize the functional
\begin{equation}\label{main_functional}
  F(u):=\frac{1}{2}\int_{\R^N}\big(|\nabla u|^2+V(x)u^2\big) dx-\frac{1}{p}\int_{\R^N}|u|^{p}dx
\end{equation}
constrained to the $L^2$-sphere
\[
  S_\rho:=\bigg\{u\in H^1(\R^N):\,\int_{\R^N}u^2 dx=\rho^2\bigg\}
\]
in order to solve \eqref{main-eq}. This has been done in the recent paper \cite{IM} by Ikoma and Miyamoto who considered the problem
\[
\left\{
\begin{aligned}
  -\Delta u+V(x)u+\la u&=g(u)\qquad\text{in $\R^N$}\\
  \|u\|_2&=\rho
\end{aligned}
\right.
\]
with $g(s)= o\left(1+|s|^{1+4/N}\right)$ being mass subcritical and satisfying various technical assumptions. Ikoma and Miyamoto found conditions on $V$ so that the functional
\[
  J(u)=\frac12\int_{\R^N}\big(|\nabla u|^2+V(x)u^2\big) dx-\int_{\R^N}G(u)dx
\]
achieves its minimum on $S_\rho$; here $G$ is a primitive of $g$.

In the case $2+\frac{4}{N}<p<2^*$ the functional is unbounded from below (and above) on $S_\rho$. Jeanjean \cite{J} considered the problem
\begin{equation}\label{eq:Jeanjean}
\left\{
\begin{aligned}
  -\Delta u+\la u&=g(u)\qquad\text{in $\R^N$}\\
  \|u\|_2&=\rho
\end{aligned}
\right.
\end{equation}
with $g$ being mass supercritical and Sobolev subcritical. A model linearity is $g(s)=\sum_{j=1}^k |s|^{p_j-2}s$ with $2+\frac4N < p_j < 2^*$ for all $j$. Jeanjean obtained a radial solution $(u,\la)\in H^1_{rad}(\R^N)\times\R^+$ of \eqref{eq:Jeanjean} by a mountain pass argument for $J$ on $S_\rho\cap H^1_{rad}(\R^N)$. In \cite{BdV} the authors obtained the existence of infinitely many solutions of \eqref{eq:Jeanjean} under the same assumptions as in \cite{J}. A major difficulty is that Palais-Smale sequences of $J$ on $S_\rho\cap H^1_{rad}(\R^N)$ need not be bounded, and a bounded Palais-Smale sequence does not need to have a convergent subsequence, even though the embedding $H^1_{rad}(\R^N)\hookrightarrow L^p(\R^N)$ is compact. The main techniques of \cite{J, BdV} depend heavily on $V\equiv0$ and do not work even in the case when $V(x)=V(|x|)$ is radial.

Normalized solutions have also been investigated on a bounded domain $\Om\subset\R^N$.  In \cite{NTV1} the authors consider
\begin{equation}\label{eq:domain}
\left\{
\begin{aligned}
  -\Delta u+\la u&=|u|^{p-2}u&&\qquad\text{in $\Om$}\\
  u&=0 &&\qquad\text{on $\pa\Om$}\\ 
  \|u\|_2&=\rho
\end{aligned}
\right.
\end{equation}
in the case of the unit ball $\Om=B_1(0)\subset\R^N$ and provide necessary and sufficient conditions for the existence of a least energy positive solution. In particular, if $2<p<2+\frac4N$ then \eqref{eq:domain} has a least energy solution for every $\rho>0$, whereas for $2+\frac4N<p<2^*$ there exists $\rho^*>0$ such that \eqref{eq:domain} has a least energy solution only for $0<\rho<\rho^*$. In the latter case there exists even a second positive solution. The mass critical case $p=2+\frac4N$ is somewhat special and has also been treated in \cite{NTV1}. Problem \eqref{eq:domain} on a general bounded domain $\Om\subset\R^N$ has been dealt with in \cite{PV}, considering not only least energy solutions but also solutions of higher Morse index. Also of interest is the recent paper \cite{PPVV} where \eqref{main-eq} and \eqref{eq:domain} have been investigated using Lyapunov-Schmidt type methods for $\rho\to0$ ($\rho\to\infty$ in the mass subcritical case). In the case of \eqref{main-eq} it is assumed that $V$ has a non-degenerate critical point $\xi_0\in\R^N$ and it is proved that the solutions concentrate at $\xi_0$ as $\rho\to0$.

We would also like to mention that recently various results have been obtained for normalized solutions of nonlinear Schr\"odinger systems. We refer to \cite{BJ,BJS,BS1,BS2,BZZ,GJ,IT} for autonomous systems, and to \cite{NTV2,NTV3} for systems with trapping potentials or in bounded domains.

In this paper we do not consider trapping potentials but potentials $V\ge0$ satisfying $V(x)\to0$ as $|x|\to\infty$. A a consequence $F$ has a mountain pass structure but the mountain pass value is the mountain pass value for the case $V\equiv0$ and is not achieved. We present a new linking argument for $F$ constrained to $S_\rho$. This will yield a solution with Morse index $N+1$ if non-degenerate. The argument is inspired by techniques coming from the Sobolev-critical case in unconstrained problems; see \cite{BC2,HMPY,CM,LM}. If $V$ is not radial we cannot work on $H^1_{rad}(\R^N)$ and have to deal with the non-compactness of the embedding $H^1(\R^N)\hookrightarrow L^p(\R^N)$. We also discuss the radial case where a solution is obtained by a mountain pass argument in $S_\rho\cap H^1_{rad}(\R^N)$.

The paper is organized as follows. In the next section we state and discuss our main results. The case of $V\in L^\infty(\R^N)$ will be dealt with in Section \ref{sec-rad}. In Section \ref{sec-pole} we prove our result for potentials having poles. Finally, in Section \ref{sec:rad} we consider the simpler case of radial potentials. Throughout the paper, we will use the notations $\|\cdot\|:=\|\cdot\|_{H^1(\R^N)}$ and $\|\cdot\|_q:=\|\cdot\|_{L^q(\R^N)}$, $q\in[1,\infty]$.

\section{Statement of results}\label{sec:results}
In order to formulate our results we need to introduce some notation. Let $U\in H_{rad}^1(\R^N)$ be the unique solution to
\[
\left\{
\begin{aligned}
 & -\Delta U+U = |U|^{p-2}U\\
 & U>0,\ U(0) = \|U\|_\infty.
\end{aligned}
\right.
\]
Then for $\rho>0$ one obtains a solution $(Z_\rho,\la_\rho)\in H_{rad}^1(\R^N)\times\R^+$ of
\begin{equation}\label{burbuja-rho}
\left\{
\begin{aligned}
  &-\Delta Z_\rho+\la_\rho Z_\rho=|Z_\rho|^{p-2}Z_\rho\\
  &Z_\rho>0,\ \|Z_\rho\|_2=\rho\\
\end{aligned}
\right.
\end{equation}
by scaling:
\[
  Z_\rho(x):=\mu^{-\frac{2}{p-2}}U(x/\mu)
\]
where $\mu>0$ is determined by
\[
  \rho^2=\|Z_\rho\|_2^2=\rho_0^2\mu^{N-\frac{4}{p-2}}, \qquad\qquad\rho_0:=\|U\|_2
\]
and $\la_\rho$ by
\begin{equation}\label{eq:lambda_rho}
  \la_\rho=\mu^{-2}=\left(\frac{\rho}{\rho_0}\right)^{-q-2}>0,\qquad
  q = \frac{4(p-2)}{N(p-2)-4} -2  =\dfrac{4N-2p(N-2)}{N(p - 2) - 4}>0.
\end{equation}
The solution $Z_\rho$ of \eqref{burbuja-rho} is a  critical point, in fact a mountain pass critical point, of
\[
  F_\infty(u):=\frac{1}{2}\int_{\R^N}|\nabla u|^2 dx-\frac{1}{p}\int_{\R^N}|u|^{p}dx
\]
constrained to $S_\rho$. Setting $m_\rho=F_\infty (Z_\rho)$ for arbitrary $\rho>0$, one has $m_{\rho_0}=F_\infty(U)$ and
\begin{equation}\label{eq:poho}
  m_\rho=m_{\rho_0}\left(\frac{\rho}{\rho_0}\right)^{-q} = 
  \dfrac{1}{q}\left(\frac{\rho}{\rho_0}\right)^{-q-2}\,\rho^2 = 
  \dfrac{1}{q}\,\lambda_\rho\rho^2
\end{equation}
(of course, $\rho_0$ depends on $p$ too; see Appendix \ref{app:a} for further details).

Now we can state our basic assumptions on the potential. These are technical and can probably be improved. We state the explicit bounds on the norms in order to make clear that our result is not of perturbation type.
\begin{itemize}
\item[$(V_1)$] $N\ge1$, $V$ and the map $W: x\mapsto V(x)|x|$ are in $L^\infty(\R^N)$, $V\ge0$, $\lim_{|x|\to\infty} V(x)=0$,
\begin{equation}\label{eq:newass}
0<\|V\|_\infty<2\bruttateta\frac{m_\rho}{\rho^2}
\end{equation}
and
\beq\label{eq:V_1}
  \|W\|_\infty \le \frac{m_\rho^{\frac{1}{2}}(2N-p(N-2))}{\rho}\left(\frac{N(p-2)-4}{2(p-2)\big(N(2p-1)(p-2)+2(p(2-N)+2N)\big)}\right)^\frac12.
\eeq
\item[$(V_2)$] $N\ge3$, $V\in L^{\frac{N}{2}}(\R^N)$, $W\in L^N(\R^N)$, $V\ge0$, $\lim_{|x|\to\infty} V(x)=0$,
\begin{equation}\label{eq:newass2}  
\|V\|_{\frac{N}{2}} < \frac{2N(p-2)}{2N-p(N-2)}\left(\left(\frac{N+2}{N}\right)^{\frac{N(p-2)-4}{N(p-2)}}-1\right)
\frac{m_\rho}{\|Z_\rho\|_{2^*}^2},
\end{equation}
and
%
%
\beq\label{eq:V_2}
  2N A^2\left[1+\dfrac{N(p-2)}{2}\right]\|V\|_{\frac{N}{2}}+4A\left[1+\dfrac{N(p-2)^2}{2N-p(N-2)}\right]\|W\|_N\le N(p-2)-4
\eeq
where $A=\frac{1}{\sqrt{\pi N(N-2)}}\left(\frac{\Ga(N)}{\Ga(N/2)}\right)^{\frac{1}{N}}$.
\end{itemize}
Here $\Ga$ denotes the Gamma function. Observe that $A$ is the Aubin-Talenti constant \cite{A,T}, that is the best constant in the Sobolev embedding $H^1(\R^N)\subset L^{2^*}(\R^N)$. Assumption $(V_2)$ allows that the potential has poles, which is important for physical reasons. This assumption can be modified in order to take into account also dimensions $N=1,2$, but for the sake of simplicity here we only deal with the case $N\ge3$ (see Remark \ref{rem:N<3inassV2} ahead). On the other hand, under assumption 
$(V_1)$ we can deal with any dimension.

\begin{theorem}\label{thm:main}
Let $2+\frac4N<p<2^*$ and $\rho>0$. If $V$ satisfies either $(V_1)$ or $(V_2)$ then \eqref{main-eq} has a solution $(u,\la)\in H^1(\R^N)\times\R^+$.
\end{theorem}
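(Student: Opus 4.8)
The plan is to realise the solution as a constrained critical point of the functional $F$ in \eqref{main_functional} on the sphere $S_\rho$, with positive Lagrange multiplier $\la$, and to carry out one common scheme, split into the two cases in Sections~\ref{sec-rad} (hypothesis $(V_1)$) and~\ref{sec-pole} (hypothesis $(V_2)$). First one checks that $F$ is well defined and $C^1$ on $S_\rho$: immediate under $(V_1)$ since $V\in L^\infty$, and under $(V_2)$ because $\int_{\R^N}Vu^2\le\|V\|_{N/2}\|u\|_{2^*}^2$ by H\"older and Sobolev. Since $F(|u|)\le F(u)$, all min-max constructions may be run over nonnegative functions; the strong maximum principle then turns a nontrivial nonnegative solution into a positive one, so it suffices to produce a nontrivial constrained critical point together with the sign information $\la>0$.

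For the min-max geometry I would use the bubbles of \eqref{burbuja-rho}. Write $s\star u(x):=e^{Ns/2}u(e^{s}x)$, which keeps $u$ on $S_\rho$; in the mass-supercritical range the Dirichlet term scales like $e^{2s}$ while the nonlinearity scales like $e^{N(p-2)s/2}$ with exponent $N(p-2)/2>2$, so for each $u$ the map $s\mapsto F(s\star u)$ goes from $0$ (as $s\to-\infty$) through a single interior maximum to $-\infty$ (as $s\to+\infty$). Consider then the $(N+1)$-parameter family $(s,y)\in\R\times\R^N\mapsto w_{s,y}:=s\star\big(Z_\rho(\cdot-y)\big)\in S_\rho$. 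Because $Z_\rho$ is a (mountain-pass) constrained critical point of $F_\infty$ on $S_\rho$ one has $F_\infty(w_{s,y})\le\max_{s}F_\infty(s\star Z_\rho)=F_\infty(Z_\rho)=m_\rho$, while $\int_{\R^N}Vw_{s,y}^2\to0$ as $s\to\pm\infty$ or $|y|\to\infty$; hence this family spans an $(N+1)$-dimensional surface whose behaviour at the boundary of parameter space sits (up to $o(1)$) in the sublevel set $\{F\le m_\rho\}$, and one sets
\[
c:=\inf_{\gamma\in\Gamma}\ \sup_{\gamma}F
\]
over an appropriate class $\Gamma$ of $(N+1)$-dimensional competitors sharing this boundary behaviour. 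The role of the explicit smallness bounds \eqref{eq:newass} on $\|V\|_\infty$, respectively \eqref{eq:newass2} on $\|V\|_{N/2}$, is exactly to force $c$ to lie strictly below the first energy level at which compactness can fail, which is (at least) $2m_\rho$: indeed, any loss of compactness splits the mass into a bound state of the limit-with-potential problem plus finitely many solitons of the equation at infinity, and by \eqref{eq:poho} together with the monotonicity of $\rho\mapsto m_\rho$ such a configuration carries energy $\ge 2m_\rho$, unless it is the single soliton that has run off to infinity with all the mass, which sits at level exactly $m_\rho$.

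The lower bound $c>m_\rho$ is the crux. Here one exploits the topology of the bubble family: compactifying the parameter space $\R\times\R^N$ by the ``soliton escaped to spatial infinity'' (which sits precisely at energy $m_\rho$) and by the degenerate dilations, the boundary map carries a nontrivial class, and a Brouwer-degree / intersection argument --- modelled on the constructions used in the Sobolev-critical unconstrained setting, cf.~\cite{BC2,HMPY,CM,LM} --- forces every $\gamma\in\Gamma$ to meet a suitable ``dual'' set on which $F\ge m_\rho$. Promoting this to the \emph{strict} inequality $\sup_\gamma F>m_\rho$ is the delicate point, and it is essential: $m_\rho$ is exactly the unattained mountain-pass value of $F$ on $S_\rho$, so an estimate $c\ge m_\rho$ alone would be useless. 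The sign condition $V\ge0$ and the strictness of $(V_1)$, $(V_2)$ are what make this work.

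It remains to produce a convergent Palais--Smale sequence at level $c$. Since such sequences for $F|_{S_\rho}$ need not be bounded, I would run the min-max for the auxiliary functional $(u,s)\mapsto F(s\star u)$ on $S_\rho\times\R$, whose geometry is the one above; a deformation argument of the type introduced by Jeanjean \cite{J} then yields $u_n\in S_\rho$ with $F(u_n)\to c$, $F'|_{S_\rho}(u_n)\to0$ and, in addition, $u_n$ asymptotically satisfying the natural scaling (Pohozaev-type) identity for \eqref{main-eq}, in which the term $\int(x\cdot\nabla V)u_n^2$ appears. Writing $\int(x\cdot\nabla V)u_n^2=-N\int Vu_n^2-2\int Vu_n\,(x\cdot\nabla u_n)$ and bounding the last integral by $\|W\|_\infty\|u_n\|_2\|\nabla u_n\|_2$ under $(V_1)$, respectively by $A\|W\|_N\|\nabla u_n\|_2^2$ under $(V_2)$ (H\"older with exponents $N,2^*,2$ and Sobolev), the bounds \eqref{eq:V_1} and \eqref{eq:V_2} are precisely what is needed to absorb these terms in the scaling and energy identities, giving that $\|u_n\|$ is bounded and that the multipliers $\la_n$ stay bounded and bounded away from $0$ (the latter because $c>m_\rho>0$), so $\la_n\to\la>0$ along a subsequence. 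Then $u_n\rightharpoonup u$, with $u$ a (possibly trivial) solution of $-\Delta u+Vu+\la u=|u|^{p-2}u$; a concentration--compactness analysis --- using $V(x)\to0$ to recognise the escaping profiles as solitons of the equation at infinity, and the inequality $F(v)\ge m_{\|v\|_2}$ valid for solutions of the limit-with-potential problem (again a consequence of the smallness conditions, which make the fibre $s\mapsto F(s\star v)$ have a unique, maximal critical point) --- shows, because $m_\rho<c<2m_\rho$, that no mass can escape: $u_n\to u$ strongly, $u\ne0$, and $u$ (replaced by $|u|$ if necessary) is the desired positive solution. The two hard steps are thus the strict linking estimate $c>m_\rho$ and the exclusion, in the concentration--compactness step, of the sole remaining escape scenario --- which is precisely the one ruled out by that strict estimate.
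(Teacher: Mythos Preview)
Your overall architecture matches the paper's: the $(N{+}1)$-dimensional linking built from dilations and translations of $Z_\rho$, the auxiliary functional $\widetilde F(u,s)=F(s\star u)$ \`a la Jeanjean to manufacture a bounded Palais--Smale sequence satisfying an approximate Pohozaev identity, and a splitting/concentration--compactness analysis for the convergence. Two points, however, differ from what actually happens in the paper, and the second is a genuine gap.

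First, the strict lower bound $c>m_\rho$ is obtained not from the quantitative smallness conditions \eqref{eq:newass}--\eqref{eq:V_2} but solely from $V\ge0$, $V\not\equiv0$: the paper introduces a barycenter map $\beta$, shows by a degree argument that every admissible $\gamma$ hits the set $\{\beta(u)=0\}$, and then proves by contradiction (via the splitting lemma) that the constrained min-max over $\{\beta=0\}$ is strictly above $m_\rho$; if it were equal, one would extract a PS sequence converging strongly to $Z_\rho$, forcing $F(v_n)\to m_\rho+\tfrac12\int V Z_\rho^2>m_\rho$. Second, and more seriously, your compactness step rests on the assertion that $F(v)\ge m_{\|v\|_2}$ for the weak limit $v$, justified by claiming that the fiber $s\mapsto F(s\star v)$ has a unique maximum at $s=0$. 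This is not established: the potential term $\int V(e^{-s}x)v^2\,dx$ has no monotone or convex structure in $s$ under $(V_1)$ (or $(V_2)$), so the fiber need not be unimodal, and $s=0$ is only \emph{a} critical point of it. Without that inequality your threshold $c<2m_\rho$ does not exclude the scenario $v\ne0$, $k\ge1$. The paper takes a different route here: from the approximate Pohozaev relation it extracts an explicit \emph{upper} bound on the multiplier, $\la\le\big(2q+\tfrac{8\theta}{N(p-2)-4}\big)m_\rho/\rho^2$ with $\theta=\min(1,2/N)$ (this is where the precise constants in $(V_1)$, $(V_2)$ and the sharper level estimate $m_{V,\rho}<(1+\theta)m_\rho$, not merely $<2m_\rho$, are used), then compares $v$ to $Z_\beta$ with $\la_\beta=\la$ via the Nehari fibering of $I_\la$ to get $F(v)\ge m_\beta+\tfrac{\la}{2}(\beta^2-\gamma^2)$, and closes with an algebraic inequality in $\alpha/\rho$, $\beta/\rho$ that contradicts $\theta\le 2/N$. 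So the heart of the compactness argument is more delicate than ``energy below $2m_\rho$ plus $F(v)\ge m_{\|v\|_2}$''.
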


\begin{corollary}\label{cor:main}
Let $N\ge1$, $2+\frac4N<p<2^*$ and $V\ge0$ be fixed, with $V,W\in L^\infty(\R^N)$ and 
$\lim_{|x|\to\infty} V(x)=0$. Then \eqref{main-eq} has a solution 
$(u,\la)\in H^1(\R^N)\times\R^+$ provided 
\begin{itemize}
\item either $\rho>0$ is small, depending on $V$, 
\item or  $p$ is close to $2+\frac4N$, depending on $V$, and $\rho$ is small independently of $V$.
\end{itemize}
\end{corollary}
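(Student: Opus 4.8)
The plan is, in either alternative, to verify that the potential satisfies the quantitative hypotheses \eqref{eq:newass} and \eqref{eq:V_1} of assumption $(V_1)$ --- all the qualitative requirements of $(V_1)$ ($N\ge1$, $V,W\in L^\infty$, $V\ge0$, $V(x)\to0$) are already among the hypotheses of the corollary, and we do not need $(V_2)$ --- and then to invoke Theorem \ref{thm:main}. The key tool is the scaling identity \eqref{eq:poho}, which I would first rewrite as
\[
  \frac{m_\rho}{\rho^2}=\frac1q\,\la_\rho=\frac1q\left(\frac{\rho}{\rho_0}\right)^{-q-2},
  \qquad q=\frac{4(p-2)}{N(p-2)-4}-2>0 .
\]
Both conditions in $(V_1)$ then read $\|V\|_\infty<c_1(N,p)\,m_\rho/\rho^2$ and $\|W\|_\infty\le c_2(N,p)\,(m_\rho/\rho^2)^{1/2}$, where $c_1=2\bruttateta>0$ and
\[
  c_2(N,p)=\bigl(2N-p(N-2)\bigr)\left(\frac{N(p-2)-4}{2(p-2)\bigl(N(2p-1)(p-2)+2(p(2-N)+2N)\bigr)}\right)^{1/2}>0
\]
are constants depending only on $N$ and $p$ (positivity of $c_2$ uses $2+\tfrac4N<p<2^*$). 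So the corollary reduces to exhibiting a regime in which $m_\rho/\rho^2$ is as large as we wish, relative to $\|V\|_\infty$, $\|W\|_\infty$, $N$ and $p$.

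For the first alternative I would fix $N$, $p$ and $V$, so that $q>0$ and $\rho_0$ are fixed; since $q>0$, letting $\rho\to0^+$ gives $m_\rho/\rho^2=\frac1q(\rho/\rho_0)^{-q-2}\to+\infty$. Hence both displayed inequalities hold for every sufficiently small $\rho$, with the threshold depending on $V$ (through $\|V\|_\infty$ and $\|W\|_\infty$), and Theorem \ref{thm:main} applies.

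For the second alternative I would instead let $p\downarrow 2+\tfrac4N$, so that $N(p-2)-4\to0^+$ and therefore $q=q(p)\to+\infty$. Using that $p\mapsto\rho_0=\|U\|_2$ stays bounded away from $0$ near $p=2+\tfrac4N$ (see Appendix \ref{app:a}), I would choose $\theta\in(0,1)$ and $\rho^*>0$, depending only on $N$, with $\rho^*\le\theta\,\rho_0(p)$ for all $p$ in a right neighbourhood of $2+\tfrac4N$. Then for any $\rho\in(0,\rho^*]$ one has $\rho/\rho_0\le\theta<1$, hence $m_\rho/\rho^2\ge\theta^{-q-2}/q\to+\infty$; moreover $2N-p(N-2)\to\tfrac8N>0$, while the factor $(N(p-2)-4)^{1/2}$ in $c_2(N,p)$ and the $q^{-1/2}$ in $(m_\rho/\rho^2)^{1/2}$ decay only like a power of $1/q$, so $c_2(N,p)(m_\rho/\rho^2)^{1/2}\to+\infty$ as well. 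Thus both conditions of $(V_1)$ hold once $p$ is sufficiently close to $2+\tfrac4N$ --- the closeness depending on $\|V\|_\infty$ and $\|W\|_\infty$, but the smallness of $\rho$ depending only on $N$ --- and Theorem \ref{thm:main} yields the solution.

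The point requiring care is the competition in the second alternative: one has to confirm that the exponential growth of $(\rho/\rho_0)^{-q-2}$ in $q$ dominates both the prefactor $1/q$ and the Gagliardo--Nirenberg-type constant $(N(p-2)-4)^{1/2}\to0$ entering \eqref{eq:V_1}. This is exactly what forces the strict choice $\rho<\rho_0$, and hence the need for a uniform positive lower bound on $\rho_0(p)$ near $p=2+\tfrac4N$ so that the smallness of $\rho$ can be taken independently of $V$.
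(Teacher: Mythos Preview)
Your proposal is correct and follows essentially the same route as the paper: the paper simply says the corollary ``follows immediately from Theorem \ref{thm:main} and Remark \ref{rem:main} a), b)'', and those remarks record precisely the asymptotics of $m_\rho/\rho^2$ (and of $(N(p-2)-4)m_\rho/\rho^2$) that you spell out via \eqref{eq:poho}. One small caveat: Appendix \ref{app:a} does not actually contain the statement that $\rho_0(p)$ stays bounded away from $0$ as $p\downarrow 2+\tfrac4N$; the paper uses this implicitly in Remark \ref{rem:main} b) by introducing the limit value $\rho_0^*=\|U\|_2$ at $p=2+\tfrac4N$, relying on standard continuity of the ground state in $p$, so you should cite that rather than the appendix.
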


The corollary follows immediately from Theorem \ref{thm:main} and Remark \ref{rem:main} a), b).

\begin{remark}\label{rem:main}
a) By \eqref{eq:poho}, for any fixed $p$,
\[
  \frac{m_\rho}{\rho^2} 
  \to\begin{cases} \infty&\qquad\text{as $\rho\to 0$},\\
                            0     &\qquad\text{as $\rho\to\infty$,}
      \end{cases}
\]
so that for $\rho$ small the $L^\infty$-norms of $V$ and $W$ in $(V_1)$ are allowed to be large. However, we do not have a single $V\not\equiv0$ such that \eqref{main-eq} has a solution for all $\rho>0$. This is an interesting problem, see also point g) below.

b) Again by \eqref{eq:poho}, for any fixed $\rho>0$,
\[
 (N(p-2)-4))\, \frac{m_\rho}{\rho^2}\to
\begin{cases}
0&\qquad\text{if $\rho>\rho_0^*$}\\
+\infty &\qquad\text{if $\rho<\rho_0^*$}
\end{cases}
\qquad\text{as $p\to 2+\frac{4}{N}$}
\]
(from above), where $\rho_0^*$ is $\|U\|_2$ for $p=2+\frac{4}{N}$.
We infer that the $L^\infty$-norms of $V$ and $W$ in $(V_1)$ are allowed to be large when $\rho<\rho_0^*$ and $p$ is close to $2+\frac{4}{N}$.

c)  If $V\in C^{0,\alpha}_{loc}(\R^N)$, the solution found in Theorem \ref{thm:main} is classical and, by the strong maximum principle, it satisfies $u>0$ in $\R^N$. Clearly $u$ is not radial in general since $V$ is not assumed to be radial. If $V$ is radial the assumptions can be weakened, see Theorem \ref{thm:rad} below.

d) The theorem will be proved using variational methods, i.e.\ $u$ will be obtained as a critical point of $F$ constrained to $S_\rho$ and $\la$ will be the Lagrange multiplier. The min-max description of the critical value $F(u)$ implies that the Morse index of $u$ is $N+1$ if $u$ is a non-degenerate critical point of $F|_{S_\rho}$.

e) A related result is contained in \cite{PPVV}, where by means of a perturbation argument a solution with small $\rho$ is obtained, having Morse index $N+1$ and concentrating at a non-degenerate maximum point of $V$. On the other hand, when dealing on bounded domains instead of $\R^N$, it is known that solutions having bounded Morse index can not exist for arbitrarily large $\rho$, see \cite{NTV1,PV}.

f) It is possible to extend our result to the more general problem \eqref{eq:Jeanjean}, when $g$ is for instance a sum of mass-supercritical powers. Since this is just a technical generalization involving suitable assumptions on the growth of $g$ we do not pursue this. On the other hand, combinations of mass-subcritical and mass-supercritical powers are more delicate to treat. Even the autonomous case has been dealt with only very recently in \cite{S1,S2}.

g) The existence of multiple solutions of \eqref{main-eq} is completely open up to now. It is to be expected that a perturbation argument, starting with the infinitely many solutions of \eqref{eq:Jeanjean} obtained in \cite{BdV}, yields the existence of multiple solutions of \eqref{eq:Jeanjean} if $V$ is small but explicit bounds are not known.
\end{remark}

If $V$ is radial the assumptions on $V$ can be weakened. More precisely, the bounds on $\|V\|_\infty$ in $(V_1)$ and on $\|V\|_{\frac{N}{2}}$ in $(V_2)$ can be dropped.
\begin{itemize}
\item[$(V^{rad}_1)$] $N\ge2$, $V$ and the map $W: x\mapsto V(x)|x|$ are in $L^\infty_{rad}(\R^N)$, $V\ge0$, $\lim_{|x|\to\infty} V(x)=0$, and \eqref{eq:V_1} holds.
\item[$(V^{rad}_2)$] $N\ge3$, $V\in L^{\frac{N}{2}}_{rad}(\R^N)$, $W\in L^N(\R^N)$, $V\ge0$, $\lim_{|x|\to\infty} V(x)=0$, and \eqref{eq:V_2} holds.
\end{itemize}
Again, assumption $(V^{rad}_2)$ may be extended to $N=2$ by reasoning as in Remark \ref{rem:N<3inassV2}. On the other hand, both these simplified assumptions can not deal with the case $N=1$, because the embedding $H^1_{rad}(\R)\hookrightarrow L^p(\R)$ is not compact even if $p>2$. 

\begin{theorem}\label{thm:rad}
Let $2+\frac4N<p<2^*$ and $\rho>0$. Then \eqref{main-eq} has a radial solution $(u,\la)\in H^1_{rad}(\R^N)\times\R^+$ if $V$ satisfies $(V^{rad}_1)$ or $(V^{rad}_2)$.
\end{theorem}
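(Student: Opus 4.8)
The plan is to repeat the min-max construction used for Theorem~\ref{thm:main}, but now carried out entirely on the restricted configuration space $S_\rho\cap H^1_{rad}(\R^N)$, and to exploit the compactness of the embedding $H^1_{rad}(\R^N)\hookrightarrow L^p(\R^N)$ (valid for $N\ge2$, $2<p<2^*$) to bypass the delicate concentration-compactness analysis that forced the smallness bounds on $\|V\|_\infty$ and $\|V\|_{N/2}$ in the non-radial case. First I would observe that, since $V$ is now radial, the functional $F$ restricted to $S_\rho\cap H^1_{rad}(\R^N)$ still has the same linking/mountain-pass geometry: the scaling family $t\mapsto Z_\rho(\,\cdot\,/t)$-type paths and the comparison with the autonomous level $m_\rho$ go through verbatim because $Z_\rho$ is itself radial and the perturbative estimates in $(V_1^{rad})$/$(V_2^{rad})$ — namely \eqref{eq:V_1}, resp.\ \eqref{eq:V_2} — are exactly the ones controlling the energy along those paths. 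In particular the min-max value $c_{rad}$ one obtains satisfies a strict inequality $c_{rad}<m_\rho$ (this is where \eqref{eq:V_1}, resp.\ \eqref{eq:V_2}, together with $V\ge0$, $V\not\equiv0$ is used, precisely as in Sections~\ref{sec-rad} and~\ref{sec-pole}), and this strict inequality is the key ingredient for compactness.

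Next I would produce a Palais–Smale sequence $(u_k)$ for $F$ on $S_\rho\cap H^1_{rad}(\R^N)$ at level $c_{rad}$, enjoying in addition the asymptotic Pohozaev/virial identity that comes from the min-max class being built out of dilations — this is the standard Jeanjean trick \cite{J}, applied within $H^1_{rad}$. The Pohozaev information, combined with $2+\frac4N<p<2^*$, forces $(u_k)$ to be bounded in $H^1(\R^N)$ and moreover keeps the associated Lagrange multipliers $\la_k$ bounded and bounded away from $0$ (again using the smallness of $W$ through $(V_1^{rad})$ or $(V_2^{rad})$; the bound on $\|V\|_\infty$ or $\|V\|_{N/2}$ is \emph{not} needed here). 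Up to a subsequence $u_k\weakto u$ weakly in $H^1_{rad}(\R^N)$, $\la_k\to\la>0$, and by the compact embedding $u_k\to u$ strongly in $L^p(\R^N)$; since $V\in L^\infty_{rad}$ (case $(V_1^{rad})$), resp.\ $V\in L^{N/2}_{rad}$ (case $(V_2^{rad})$) with $V(x)\to0$ at infinity, the potential term $\int V u_k^2$ also converges — for the $L^{N/2}$ case one splits into $|x|\le R$, where compactness in $L^{2^*}$ on balls does the job, and $|x|>R$, where $\|V\|_{L^{N/2}(|x|>R)}$ is small. This yields strong $H^1$ convergence $u_k\to u$, hence $u\in S_\rho\cap H^1_{rad}$ solves \eqref{main-eq} with multiplier $\la>0$; finally $u\ge0$ is obtained by working with $|u_k|$ throughout, as usual.

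The main obstacle, and the only place requiring genuine care, is the verification that the strict inequality $c_{rad}<m_\rho$ survives in the radial setting and is compatible with the weaker hypotheses: in the non-radial proof this strict gap both provides compactness \emph{and} is what the quantitative bounds \eqref{eq:newass}, \eqref{eq:newass2} are tuned for, whereas here compactness is free, so one must check that \eqref{eq:V_1} alone (resp.\ \eqref{eq:V_2} alone) still yields $c_{rad}<m_\rho$. This is where the positivity $V\ge0$ enters decisively — it guarantees $F\ge F_\infty$ pointwise on $S_\rho$, so one needs $c_{rad}$ to be \emph{below} $m_\rho$ rather than merely finite, which is delivered by evaluating $F$ on the same dilation path used for $F_\infty$ and absorbing the extra $\frac12\int Vu^2$ term via the $W$-bound exactly as in the proof of Theorem~\ref{thm:main}. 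I expect the remaining steps (boundedness of the PS sequence, boundedness and positivity of $\la_k$, the splitting argument for the $L^{N/2}$ potential term) to be routine adaptations of Sections~\ref{sec-rad}--\ref{sec-pole}, with the radial compactness making the argument strictly shorter than its non-radial counterpart.
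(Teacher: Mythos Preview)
Your proposal contains a fundamental sign error that unravels the whole strategy. You assert repeatedly that the radial min-max level satisfies $c_{rad}<m_\rho$, and that this strict inequality is ``the key ingredient for compactness''. But you yourself note that $V\ge0$ implies $F\ge F_\infty$ pointwise on $S_\rho$; it follows immediately that for \emph{any} admissible class one has $\max F\ge\max F_\infty$, hence $c_{rad}\ge m_\rho$. There is no way to ``absorb the extra $\frac12\int Vu^2$ term'' to push the level below $m_\rho$, because that term is nonnegative. The paper in fact proves the opposite inequality, $m_{V,\rho}^r>m_\rho$ (Section~\ref{sec:rad}, first lemma), via a contradiction argument: if equality held, a Palais--Smale sequence at level $m_\rho$ would converge strongly to $Z_\rho$, forcing $F(v_n)\to m_\rho+\frac12\int V Z_\rho^2>m_\rho$.

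You have also misidentified the roles of the hypotheses. The bounds \eqref{eq:V_1} and \eqref{eq:V_2} on $W$ are \emph{not} what control the min-max level; they enter only in the proof of Proposition~\ref{prop:bddPSseq} to guarantee boundedness of the Palais--Smale sequence and positivity of the limiting multiplier $\la$. The level bounds $m_{V,\rho}<(1+\theta)m_\rho$ in the non-radial proof come from \eqref{eq:newass}, \eqref{eq:newass2} (the bounds on $\|V\|_\infty$, $\|V\|_{N/2}$), and they are used there only in the splitting analysis of Section~\ref{sec-convergence-PS} to exclude multi-bump configurations. In the radial case that analysis is unnecessary: once $\la>0$ and $(v_n)$ is bounded, the compact embedding $H^1_{rad}\hookrightarrow L^p$ gives $v_n\to v$ in $L^p$, and then $v_n=(-\Delta+\la+V)^{-1}\big(|v_n|^{p-2}v_n+(\la-\la_n)v_n\big)\to v$ strongly in $H^1$. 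No level inequality is needed for compactness, which is precisely why \eqref{eq:newass}, \eqref{eq:newass2} can be dropped. Your second paragraph (bounded PS sequence, $\la>0$, compact embedding) is essentially correct, but it is disconnected from, and contradicted by, the framing in your first and third paragraphs.
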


\begin{remark}
The solution in Theorem \ref{thm:rad} will be obtained by a mountain pass argument applied to $F$ on $S_\rho\cap H^1_{rad}(\R^N)$, thus it has Morse index $1$ as critical point of $F|_{S_\rho\cap H^1_{rad}(\R^N)}$, if non-degenerate, in the radial space. Considered as a critical point of $F$ on $S_\rho$ in the full space, it will have a larger Morse index in general. If $V$ is radial and radially decreasing then the positive solution $u$ will also be radially decreasing.  It follows that the Morse index of the solution is at least $N+1$ due to the variations coming from the translations. This suggests that it corresponds to the solution from Theorem \ref{thm:main} which is obtained as critical point of $F$ on $S_\rho$ by an $(N+1)$-dimensional linking argument.
\end{remark}

\section{Proof of Theorem \ref{thm:main} in case $(V_1)$}\label{sec-rad}

\subsection{Linking geometry}\label{sec-mpgeometry}
For $h\in\R$ and $u\in H^1(\R^N)$, we introduce the scaling
$$h\star u(x):=e^{\frac{N}{2}h}u(e^h x)$$
which preserves the $L^2$-norm: $\|h\star u\|_2=\|u\|_2$ for all $h\in\R$. For $R>0$ and $h_1<0<h_2$, which will be determined later, we set
$$Q:=B_R\times [h_1,h_2]\subset\R^N\times \R$$
where $B_R=\{x\in\R^N:|x|\le R\}$ is the closed ball of radius $R$ around $0$. For $\rho>0$ we define
$$\Ga_\rho:=\{\ga:Q\to S_\rho\mid\ga\text{ continuous, }\ga(y,h)=h\star Z_\rho(\cdotp-y)\text{ for all $(y,h)\in \pa Q$}\}.$$
We want to find a solution to (\ref{main-eq}) in $S_\rho$ whose energy $F$ is given by
\[
  m_{V,\rho}:=\inf_{\ga\in\Ga_\rho}\max_{(y,h)\in Q}F(\ga(y,h)).
\]
In order to develop a min-max argument, we need to prove that
$$\sup_{\ga\in\Ga_\rho}\max_{(y,h)\in \pa Q} F(\ga(y,h))<m_{V,\rho}$$
at least for some suitable choice of $Q$. For this purpose we prove Propositions \ref{prop-MP-geom}, which gives a lower bound for $m_{V,\rho}$, and \ref{prop-boundary}, which gives an upper bound for $F\circ\ga$ on the boundary $\pa Q$, for any given $\ga\in\Ga_\rho$. The values of $R>0$ and $h_1<0<h_2$ will be determined in Proposition \ref{prop-boundary}. The results up to and including Proposition \ref{prop-MP-geom} hold for arbitrary $R>0$ and $h_1<0<h_2$.\\

In order to study the behaviour of the Palais-Smale sequences, we introduce a suitable splitting Lemma. For $\la>0$ we set
$$I_\la(v):=\frac{1}{2}\int_{\R^N}|\nabla v|^2 dx+\frac{1}{2}\int_{\R^N}V(x)v^2 dx+\frac{\la}{2}\int_{\R^N}v^2 dx-\frac{1}{p}\int_{\R^N}|v|^{p}dx$$
and
$$I_{\infty,\la}(v):=\frac{1}{2}\int_{\R^N}|\nabla v|^2 dx+\frac{\la}{2}\int_{\R^N}v^2 dx-\frac{1}{p}\int_{\R^N}|v|^{p}dx.$$

\begin{lemma}[Splitting Lemma]\label{splitting-lemma}
Let $v_n\in H^1(\R^N)$ be a Palais-Smale sequence for $I_\la$ such that $v_n\to v$ weakly in $H^1(\R^N)$. Then there exist an integer $k\ge 0$, $k$ solutions $w^1,\dots,w^k\in H^1(\R^N)$ to the limit equation
$$-\Delta w+\la w=|w|^{p-2}w$$
and $k$ sequences $\{y_n^j\}_n\subset\R^N$, $1\le j\le k$, such that $|y_n^j|\to\infty$ as $n\to\infty$, and
\begin{equation}\label{splitting-PS}
v_n=v+\sum_{j=1}^k w^j(\cdotp-y^j_n)+o(1)\qquad\text{strongly in $H^1(\R^N)$.}
\end{equation}
Moreover, we have
\begin{equation}
\label{splitting-norm}
\|v_n\|_2^2=\|v\|_2^2+\sum_{j=1}^k \|w^j\|_2^2+o(1)
\end{equation}
and
\begin{equation}\label{splitting-energy}
I_\la(v_n)\to I_\la(v)+\sum_{j=1}^k I_{\infty,\la}(w^j)\qquad\text{as $n\to\infty$.}
\end{equation}
\end{lemma}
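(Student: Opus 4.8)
The plan is to follow the classical concentration-compactness / Struwe-type profile decomposition scheme, adapted to the constrained functional $I_\la$. First I would show that $v_n$ is bounded in $H^1(\R^N)$: combining the Palais-Smale condition $I_\la'(v_n)\to0$ in $H^{-1}$ with the expression for $I_\la(v_n)$, one forms the combination $I_\la(v_n)-\frac1p\langle I_\la'(v_n),v_n\rangle$, which controls $\left(\frac12-\frac1p\right)\int(|\nabla v_n|^2+V v_n^2+\la v_n^2)$ from above; since $V\ge0$ and $\la>0$ this gives an a priori bound on $\|v_n\|$. Passing to a subsequence, $v_n\weakto v$ in $H^1$, $v_n\to v$ in $L^q_{loc}$ for $q<2^*$, and pointwise a.e.; standard arguments show $v$ solves $-\Delta v+V v+\la v=|v|^{p-2}v$, i.e. $I_\la'(v)=0$.

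Next I would set $v_n^1:=v_n-v$. By the Brezis-Lieb lemma and the local compactness above, $v_n^1\weakto0$, $\|v_n\|_2^2=\|v\|_2^2+\|v_n^1\|_2^2+o(1)$, and $I_\la(v_n^1)\to I_\la(v_n)-I_\la(v)$; moreover, because $V\in L^\infty$ (case $(V_1)$) or $V\in L^{N/2}$ (case $(V_2)$) vanishes at infinity while $v_n^1\weakto0$, one has $\int V (v_n^1)^2\to0$, so $I_\la(v_n^1)=I_{\infty,\la}(v_n^1)+o(1)$ and similarly $I_{\infty,\la}'(v_n^1)\to0$ in $H^{-1}$. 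Thus $v_n^1$ is a Palais-Smale sequence for the translation-invariant functional $I_{\infty,\la}$ weakly converging to $0$. If $v_n^1\to0$ strongly we stop with $k=0$. Otherwise, a vanishing-versus-nonvanishing dichotomy (via Lions' lemma: if $\sup_y\int_{B_1(y)}(v_n^1)^2\to0$ then $v_n^1\to0$ in $L^p$, forcing strong convergence to $0$) produces points $y_n^1$ with $|y_n^1|\to\infty$ — they must escape to infinity since $v_n^1\weakto0$ locally — such that $v_n^1(\cdot+y_n^1)\weakto w^1\ne0$; the weak limit $w^1$ solves the limit equation. Then iterate on $v_n^2:=v_n^1-w^1(\cdot-y_n^1)$, each step lowering $\|v_n^j\|_2^2$ and $I_{\infty,\la}(v_n^j)$ by a fixed positive amount bounded below (every nontrivial solution of the limit equation has $L^2$-norm and energy bounded away from $0$, by the mass-supercritical Pohozaev/Nehari analysis and since $\la$ stays in a fixed compact subset of $\R^+$ along the finitely many profiles), so the procedure terminates after finitely many steps $k$. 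Collecting the identities at each stage yields \eqref{splitting-PS}, \eqref{splitting-norm}, \eqref{splitting-energy}.

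The main obstacle is twofold. First, one must verify that the "bubbles" $w^j$ indeed solve the $\la$-dependent limit equation $-\Delta w+\la w=|w|^{p-2}w$ with the \emph{same} $\la$ throughout and that the procedure genuinely stops — this requires a uniform lower bound $\inf_j\|w^j\|_2^2\ge c>0$, which hinges on the fact that, for mass-supercritical $p$, nontrivial solutions of the limit equation at a fixed $\la$ form a set bounded away from $0$ in $L^2$, together with an argument that only finitely many distinct translation scales appear. Second, and more delicate in the present setting, is the handling of the potential term: one needs that $\int V(x)(v_n-v)^2\to0$ and, in case $(V_2)$, that the possibly singular potential $V\in L^{N/2}$ does not obstruct this — here one uses Sobolev's inequality $\|v_n^1\|_{2^*}\le C\|\nabla v_n^1\|_2$ together with $\|V\mathbf 1_{|x|>R}\|_{N/2}\to0$ and the local $L^2$ (hence, by interpolation, away from the pole, local $L^{2^*}$-weak) convergence of $v_n^1$ to $0$; the contribution near any pole of $V$ is absorbed by Hölder and the smallness/vanishing of $V$ there. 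Once these points are secured, the bookkeeping to assemble \eqref{splitting-PS}–\eqref{splitting-energy} is routine.
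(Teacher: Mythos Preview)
Your sketch is correct and follows the standard Struwe/Benci--Cerami profile decomposition argument. In fact the paper does not prove this lemma at all: it simply refers to \cite[Lemma~3.1]{BC} (Benci--Cerami, exterior domains), noting that the proof carries over verbatim to $\R^N$ and that $\la>0$ is used exactly here. Your outline is essentially what one finds in that reference, so there is no genuine difference in approach.

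Two small remarks. First, the hypothesis already assumes $v_n\weakto v$ in $H^1$, so boundedness is given and your opening step is unnecessary (though harmless). Second, your aside that ``$\la$ stays in a fixed compact subset of $\R^+$ along the finitely many profiles'' is slightly off: $\la$ is a single fixed positive number throughout the lemma, and the uniform lower bound on $\|w^j\|_2$ follows directly from the Pohozaev/Nehari identities for the limit equation at that fixed $\la$. Otherwise your handling of the potential term (including the $L^{N/2}$ case) is more explicit than what the paper provides, and is correct.
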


The proof of Lemma \ref{splitting-lemma} can be found in \cite[Lemma 3.1]{BC}. The only difference is that \cite{BC} deals with exterior domains, not with $\R^N$, however the proof is exactly the same. We stress that exactly here we need $\la>0$, as in \cite{BC}.\\

Now we recall the notion of \textit{barycentre} of a function $u \in H^1(\R^N)\setminus \{0\}$ which has been introduced in \cite{BW} and in \cite{CP}. Setting
\beq\label{mu}
\nu(u)(x)=\frac{1}{|B_1(0)|}\int_{B_1(x)}|u(y)|dy,
\eeq
we observe that  $\nu(u)$ is bounded and continuous, so  the function
\beq\label{hat}
\widehat{u}(x)=\left[\nu(u)(x)-\frac{1}{2}\max \nu(u)\right]^{+}
\eeq
is well defined, continuous, and has compact support. Therefore we can define
$\beta: H^1(\R^N)\setminus\{0\}\to \R^N$
as
$$
\beta(u)=\frac{1}{\|\widehat{u}\|_{1}}\int_{\R^N}\widehat{u}(x)\, x\, dx.
$$
The map $\beta$ is well defined, because $\widehat{u}$ has compact support, and it is not difficult to verify that it enjoys the following properties:
\begin{itemize}
\item $\beta$ is continuous in $H^1(\R^N)\setminus \{0\}$;
\item if $u$ is a radial function, then $\beta(u)=0$;
\item $\beta(tu)=\beta(u)$ for all $t\neq0$ and for all $u\in H^1(\R^N)\setminus\{0\}$;
\item setting $u_z(x)=u(x-z)$ for $z\in \R^N$ and $u\in H^1(\R^N)\setminus\{0\}$ there holds $\beta(u_z)=\beta(u)+z$.
\end{itemize}
Now we define
\[
\begin{aligned}
&\cD:=\{D\subset S_\rho:\,D\text{ is compact, connected, }h_1\star Z_\rho,\,h_2\star Z_\rho\in D\}\\
&\cD_0:=\{D\in\cD:\,\text{$\beta(u)=0$ for all $u\in D$}\},\\
&\cD_r:=\cD\cap H^1_{rad}(\R^N),
\end{aligned}
\]
and
\[
\begin{aligned}
\ell^r_\rho&:=\inf_{D\in\cD_r}\max_{u\in D} F_\infty(u)\\
\ell^0_\rho&:=\inf_{D\in\cD_0}\max_{u\in D} F_\infty(u)\\
\ell_\rho&:=\inf_{D\in\cD}\max_{u\in D}F_\infty(u).
\end{aligned}
\]
It has been proved in \cite{J} that
\[
  m_\rho = \inf_{\si\in\Si_\rho} \max_{t\in[0,1]} F_\infty(\si(t))
\]
where
\[
  \Si_\rho = \{\si\in\cC([0,1],S_\rho):\si(0)=h_1\star Z_\rho,\,\si(1)=h_2\star Z_\rho\}.
\]


\begin{lemma}\label{Lemma-MP-Geom}
$\ell^r_\rho=\ell^0_\rho=\ell_\rho=m_\rho$
\end{lemma}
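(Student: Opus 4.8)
The plan is to prove the chain of equalities by establishing the inequalities $\ell_\rho\le\ell^0_\rho\le\ell^r_\rho$ and $\ell_\rho\ge m_\rho$, $\ell^r_\rho\le m_\rho$, so that everything is squeezed to the common value $m_\rho$. The first two inequalities $\ell_\rho\le\ell^0_\rho\le\ell^r_\rho$ are immediate from the inclusions $\cD_r\subset\cD_0\subset\cD$: any radial path has $\beta\equiv0$ by the second listed property of $\beta$, and $\cD_0\subset\cD$ by definition; passing to the infimum over a smaller class only increases the min-max value. Note that all three classes are nonempty: e.g.\ $\{h\star Z_\rho : h\in[h_1,h_2]\}$ is a compact connected subset of $S_\rho\cap H^1_{rad}(\R^N)$ containing the two endpoints.

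Next I would show $\ell^r_\rho\le m_\rho$. By Jeanjean's characterization quoted just above, $m_\rho=\inf_{\si\in\Si_\rho}\max_{t}F_\infty(\si(t))$. Given any path $\si\in\Si_\rho$, its image $D=\si([0,1])$ is compact, connected, contains $h_1\star Z_\rho$ and $h_2\star Z_\rho$; however it need not be radial. The standard fix is to use the fact (from \cite{J}) that the relevant mountain pass path can be taken inside the radial class, or alternatively to apply the Schwarz symmetrization: replacing $\si(t)$ by its symmetric decreasing rearrangement $\si(t)^*$ gives a (continuous, after a suitable argument) path in $\cD_r$ along which $F_\infty$ does not increase, since rearrangement preserves the $L^2$ and $L^p$ norms and does not increase the Dirichlet energy. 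Since the endpoints $h_i\star Z_\rho$ are already radially decreasing they are fixed by symmetrization. This yields $\ell^r_\rho\le\max_t F_\infty(\si(t))$ for every $\si$, hence $\ell^r_\rho\le m_\rho$.

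It remains to prove $\ell_\rho\ge m_\rho$, which is the crux. Take any $D\in\cD$. Because $D$ is compact and connected and contains $h_1\star Z_\rho$ and $h_2\star Z_\rho$, and $F_\infty$ restricted to the scaling orbit $h\mapsto F_\infty(h\star Z_\rho)$ is a function whose values at $h_1$ and $h_2$ can be arranged (by choosing $h_1\ll0\ll h_2$) to be strictly below $m_\rho$, one wants to extract from $D$ a \emph{continuous path} from one endpoint to the other along which the max of $F_\infty$ is $\ge m_\rho$. The obstacle is that a compact connected set need not be path-connected, so one cannot directly invoke the path characterization of $m_\rho$. The way around this is the standard deformation/coarea-type argument: suppose for contradiction $\max_{u\in D}F_\infty(u)<m_\rho$. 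Then $D$ lies in the sublevel set $\{F_\infty<m_\rho\}$. Using the mountain pass structure of $F_\infty$ on $S_\rho$ (i.e.\ that $Z_\rho$ is a mountain pass point with value $m_\rho$ and that $S_\rho\cap\{F_\infty<m_\rho\}$ is disconnected, or more precisely that the two endpoints lie in different path components of $\{F_\infty<m_\rho\}$, which follows from Jeanjean's result together with the monotonicity of $h\mapsto F_\infty(h\star Z_\rho)$), we get that $D$, being connected and contained in $\{F_\infty<m_\rho\}$, lies entirely in one path-component — contradicting the fact that it contains points $h_1\star Z_\rho$ and $h_2\star Z_\rho$ sitting in different components. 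Hence $\max_D F_\infty\ge m_\rho$ for all $D\in\cD$, so $\ell_\rho\ge m_\rho$.

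The main technical point to nail down carefully is precisely this separation statement: that for $h_1$ sufficiently negative and $h_2$ sufficiently positive, $h_1\star Z_\rho$ and $h_2\star Z_\rho$ lie in distinct connected components of the sublevel set $\{u\in S_\rho: F_\infty(u)<m_\rho\}$. This is where the scaling $h\star u$ plays its role: along the one-parameter family $h\mapsto h\star Z_\rho$ one computes $F_\infty(h\star Z_\rho)=\tfrac12 e^{2h}\|\nabla Z_\rho\|_2^2-\tfrac1p e^{\frac{N(p-2)}{2}h}\|Z_\rho\|_p^p$, which tends to $-\infty$ as $h\to+\infty$ (since $\frac{N(p-2)}{2}>2$ in the mass-supercritical regime) and to $0^-$ (in fact to $0$) as $h\to-\infty$, with a unique interior maximum equal to $m_\rho$; choosing $h_1,h_2$ beyond where this curve drops below $m_\rho$ puts the endpoints strictly inside the sublevel set, and the "fiber" structure of $F_\infty$ under scaling—together with the fact that $m_\rho$ is the genuine mountain-pass level, so that crossing from one component to the other forces the energy up to $m_\rho$—gives the disconnection. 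Once this is in hand the four-way equality follows by combining $m_\rho\le\ell_\rho\le\ell^0_\rho\le\ell^r_\rho\le m_\rho$.
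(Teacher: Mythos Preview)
Your overall strategy (squeeze via $m_\rho\le\ell_\rho\le\ell^0_\rho\le\ell^r_\rho\le m_\rho$) is the same as the paper's, but two of the steps are handled less cleanly than they could be.

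For $\ell^r_\rho\le m_\rho$ you take an unnecessary detour through Schwarz symmetrization, which brings in the nontrivial issue of continuity of $u\mapsto u^*$ in $H^1$. The paper simply uses the explicit radial set $D=\{h\star Z_\rho:h\in[h_1,h_2]\}\in\cD_r$. A direct computation (which you in fact carry out later) shows $h\mapsto F_\infty(h\star Z_\rho)$ has its unique maximum at $h=0$ with value $F_\infty(Z_\rho)=m_\rho$, so $\max_{u\in D}F_\infty(u)=m_\rho$ and hence $\ell^r_\rho\le m_\rho$. No symmetrization is needed.

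For $\ell_\rho\ge m_\rho$ your argument contains a genuine gap as written. You assert that ``$D$, being connected and contained in $\{F_\infty<m_\rho\}$, lies entirely in one path-component''. In general a connected set need not sit inside a single path-component; this requires the ambient space to be locally path-connected (which is true for open subsets of the Hilbert manifold $S_\rho$, but you never invoke this). Your subsequent discussion about the ``separation statement'' and the fiber structure of $F_\infty$ does not address this point either. The paper's argument bypasses the issue elegantly: if $\max_{u\in D}F_\infty(u)<m_\rho$ for some $D\in\cD$, then by compactness of $D$ and continuity of $F_\infty$ one has $\max_{u\in U_\de(D)}F_\infty(u)<m_\rho$ for some $\de>0$. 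The $\de$-neighborhood $U_\de(D)\subset S_\rho$ is open and connected, hence path-connected, so it contains a path $\si\in\Si_\rho$ with $\max_t F_\infty(\si(t))<m_\rho$, contradicting the definition of $m_\rho$. This is the missing idea in your argument: thicken the compact connected set to an open connected (hence automatically path-connected) set, rather than appealing to components of the full sublevel set.
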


\begin{proof}
Clearly $\cD_r\subset\cD_0\subset\cD$, so that $\ell^r_\rho\ge \ell^0_\rho\ge \ell_\rho$. It remains to prove that $\ell_\rho\ge m_\rho$ and $m_\rho\ge\ell^r_\rho$.\\

Arguing by contradiction we assume that $m_\rho>\ell_\rho$. Then $\max_{u\in D}F_\infty(u)<m_\rho$ for some $D\in\cD$, hence $\max_{u\in U_\de(D)}F_\infty(u)<m_\rho$ for some $\de>0$; here $U_\de(D)$ is the $\de$-neighborhood of $D$. Observe that $U_\de(D)$ is open and connected, so it is path-connected. Therefore there exists a path $\si\in\Si_\rho$ such that $\max_{t\in[0,1]}F_\infty(\si(t))<m_\rho$, a contradiction.\\

The inequality $m_\rho\ge\ell^r_\rho$ follows from the fact that the set
$D:=\{h\star Z_\rho:h\in[h_1,h_2]\}\in\cD_r$
satisfies
$$\max_{u\in D}F_\infty(u)=\max_{h\in[h_1,h_2]}F_\infty(h\star Z_\rho)=m_\rho.$$
\end{proof}

The next lemma is a special case of \cite[Theorem 4.5]{G}.

\begin{lemma}\label{lemma-ekeland}
Let $M$ be a Hilbert manifold and let $J\in C^1(M,\R)$ be a given functional. Let $K\subset M$ be compact and consider a subset
$$\cC\subset\{C\subset M:\,C\text{ is compact, $K\subset C$}\}$$
which is homotopy-stable, i.e.\ it is invariant with respect to deformations leaving $K$ fixed. Assume that
$$\max_{u\in K}J(u)<c:=\inf_{C\in\mathcal{C}}\max_{u\in C} J(u)\in\R.$$
Let $\si_n\in \R$ be such that $\si_n\to 0$ and $C_n\in\mathcal{C}$ be a sequence such that
$$0\le\max_{u\in C_n}J(u)-c\le \si_n.$$
Then there exists a sequence $v_n\in M$ such that
\begin{enumerate}
\item $|J(v_n)-c|\le \si_n$,\label{propF}
\item $\|\nabla_M J(v_n)\|\le \tilde{c}\sqrt{\si_n}$,\label{propF'}
\item $\dist(v_n,C_n)\le \tilde{c}\sqrt{\si_n}$,\label{propv_n}
\end{enumerate}
for some constant $\tilde{c}>0$.
\end{lemma}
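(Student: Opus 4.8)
The statement is the quantitative version of Ghoussoub's minimax principle on homotopy-stable families, so I would either quote \cite[Theorem 4.5]{G} directly or reproduce the standard deformation argument, which I sketch now. The plan is to argue by contradiction for each fixed (large) $n$: suppose no $v_n\in M$ satisfies \eqref{propF}--\eqref{propv_n} simultaneously with a constant $\tilde c$ to be fixed. Negating the conjunction, every $v$ lying in the ``tube''
\[
  \cA_n:=\bigl\{v\in M:\ |J(v)-c|\le \si_n,\ \dist(v,C_n)\le 2\tilde c\sqrt{\si_n}\,\bigr\}
\]
must satisfy $\|\nabla_M J(v)\|>\tilde c\sqrt{\si_n}$. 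The goal is to deform $C_n$ strictly below the level $c$ while keeping $K$ fixed, contradicting the definition of $c$ together with the homotopy stability of $\cC$.

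\textbf{Key steps.} First, using that $J\in C^1$ and $\nabla_M J\ne 0$ on $\cA_n$, construct a locally Lipschitz pseudo-gradient vector field $V$ on a neighbourhood of $\cA_n$ with $\|V(v)\|\le 1$ and $dJ(v)[V(v)]\ge c_0\tilde c\sqrt{\si_n}$ on $\cA_n$, where $c_0>0$ is an absolute constant (e.g. $c_0=\tfrac12$); multiply $V$ by a Lipschitz cut-off $\varphi$ equal to $1$ on $\{\,|J-c|<\si_n,\ \dist(\cdot,C_n)<\tilde c\sqrt{\si_n}\,\}$ and vanishing off $\cA_n$, and let $\si(t,\cdot)$ be the (negative) flow of $\eta:=\varphi V$. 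Since $\|\eta\|\le 1$, the flow exists for all $t\in[0,T]$ with $T:=\tilde c\sqrt{\si_n}$ (using completeness of $M$, as holds in our application $M=S_\rho$), and $\dist(\si(t,v),C_n)\le t\le T$. Second, follow the energy along the flow starting from $v\in C_n$: as long as the trajectory stays in the region $\{|J-c|\le\si_n\}$ one has $\tfrac{d}{dt}J(\si(t,v))\le -c_0\tilde c\sqrt{\si_n}$, whence either $J$ drops below $c-\si_n$ (already $<c$) or $J(\si(T,v))\le J(v)-c_0\tilde c^2\si_n\le c+\si_n-c_0\tilde c^2\si_n$; in both cases $J(\si(T,v))<c$ provided $\tilde c$ is chosen with $c_0\tilde c^2>1$. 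Points of $C_n$ with $J\le c-\si_n$ lie outside $\supp\eta$ and do not move, so $\max_{\si(T,C_n)}J<c$. Third, for $n$ large enough that $\si_n<c-\max_K J$, the tube $\cA_n$ is disjoint from $K$ (since $\cA_n\subset\{J\ge c-\si_n\}$), hence $\eta\equiv0$ near $K$ and the deformation $(t,x)\mapsto\si(tT,x)$ fixes $K$; by homotopy stability $\si(T,C_n)\in\cC$, so $\max_{\si(T,C_n)}J\ge c$ — a contradiction. Therefore the desired $v_n$ exists for all large $n$, and for the remaining finitely many indices one adjusts $\tilde c$ or picks $v_n$ by hand, noting $\si_n\to0$.

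\textbf{Main obstacle.} The conceptual content is routine; the delicate points are (i) the construction of the locally Lipschitz pseudo-gradient field on a $C^1$ Hilbert manifold (standard partition-of-unity argument, but needs the regularity hypothesis on $J$ and plays no role beyond it), and (ii) the careful bookkeeping of the two cut-off thresholds and of the flow time $T\sim\sqrt{\si_n}$ so that the energy gain $c_0\tilde c^2\si_n$ beats $\si_n$ while the displacement stays $\le 2\tilde c\sqrt{\si_n}$, keeping $\tilde c$ a fixed constant independent of $n$. One also must observe that completeness of $M$ (satisfied for $M=S_\rho$, a closed submanifold of $H^1(\R^N)$) guarantees global existence of the bounded flow on $[0,T]$. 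Given these, the argument is exactly the proof of \cite[Theorem 4.5]{G} in the present special case.
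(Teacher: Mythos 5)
Your first option is exactly what the paper does: Lemma \ref{lemma-ekeland} is stated with no proof beyond the remark that it is a special case of \cite[Theorem 4.5]{G}, so citing Ghoussoub is the ``same approach.'' Your sketched alternative, however, is a genuinely different route from Ghoussoub's actual argument: Theorem 4.5 in \cite{G} is proved by applying Ekeland's variational principle to the functional $C\mapsto\max_{C}J$ on the complete metric space of compact sets containing $K$ equipped with the Hausdorff metric, which produces the almost-critical point, the $\sqrt{\si_n}$ rate, and the closeness to $C_n$ all at once and without any contradiction argument; your negate-and-deform proof instead rests on a quantitative deformation lemma \`a la Willem. Both work, but be aware of the one constant-bookkeeping issue in your version: the negation of (1)--(3) gives the lower bound $\|\nabla_M J\|>\tilde c\sqrt{\si_n}$ only on the tube of radius $\tilde c\sqrt{\si_n}$ around $C_n$, whereas your cut-off $\varphi$ needs a pseudo-gradient defined on the larger support of radius $2\tilde c\sqrt{\si_n}$, where that bound is not available; you must either shrink the core of $\varphi$ and the flow time accordingly, or negate the statement with mismatched constants in (2) and (3). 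This is routine but it is precisely the kind of detail that the Ekeland route avoids, which is presumably why the paper (and \cite{G}) takes it.
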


\begin{lemma}\label{C-m-ineq}
$L_\rho:=\inf_{D\in\cD_0}\max_{u\in D}F(u)>m_\rho$
\end{lemma}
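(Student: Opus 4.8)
The plan is to show that forcing the barycentre to vanish genuinely raises the linking level above the Sobolev-critical mountain pass value $m_\rho$. By Lemma \ref{Lemma-MP-Geom} we already know $L_\rho \ge \ell^0_\rho = m_\rho$, so the whole point is to exclude equality. I would argue by contradiction: suppose $L_\rho = m_\rho$. Then there is a minimizing sequence $D_n \in \cD_0$ with $\max_{u\in D_n} F(u) \to m_\rho$. Since $F \ge F_\infty$ (because $V\ge 0$) and each $D_n$ already sits in $\cD_0 \subset \cD$, we also have $\max_{D_n} F_\infty \to m_\rho$; moreover $\max_{D_n} F = m_\rho + o(1)$ from above, so the constrained functional $F|_{S_\rho}$ has its min-max value $m_\rho$ \emph{attained in the limit} along sets with $\beta \equiv 0$. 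The idea is to apply the deformation-type result Lemma \ref{lemma-ekeland}, with $M = S_\rho$, $J = F$, the class $\cC = \cD_0$ (which is homotopy-stable, the fixed compact set being $K = \{h_1\star Z_\rho, h_2\star Z_\rho\}$ — one checks a deformation leaving $K$ fixed preserves connectedness, compactness, and, crucially, the property $\beta\equiv 0$ if the deformation is through radial-barycentre-preserving maps; in fact one uses the standard negative-gradient flow, which does not obviously preserve $\beta\equiv0$, so a slightly more careful setup is needed — see below). This produces a Palais–Smale sequence $v_n$ for $F|_{S_\rho}$ at level $m_\rho$ with $\dist(v_n, D_n) \to 0$, hence, by continuity of $\beta$ on $H^1\setminus\{0\}$ and $\beta|_{D_n}\equiv 0$, with $\beta(v_n) \to 0$ as well (provided $v_n$ stays away from $0$ in $H^1$, which follows from $F(v_n)\to m_\rho > 0$ and the scaling/Gagliardo–Nirenberg structure).

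Next I would analyze this Palais–Smale sequence. A Palais–Smale sequence for $F$ on $S_\rho$ carries a Lagrange multiplier $\la_n$, so $v_n$ is a Palais–Smale sequence for $I_{\la_n}$; using the standard Pohozaev/Nehari bookkeeping for the mass-supercritical regime one shows $\{v_n\}$ is bounded in $H^1$ and $\la_n \to \la > 0$ (this is where the mass-supercriticality $p > 2 + 4/N$ and the smallness of $\|V\|_\infty$ in $(V_1)$ enter, exactly as in the passages preceding this lemma). Passing to a weak limit $v_n \weakto v$ and invoking the Splitting Lemma \ref{splitting-lemma}, we get $v_n = v + \sum_{j=1}^k w^j(\cdot - y_n^j) + o(1)$ in $H^1$, with $|y_n^j| \to \infty$, each $w^j$ solving $-\Delta w + \la w = |w|^{p-2}w$, and the energy/mass decompositions \eqref{splitting-norm}, \eqref{splitting-energy}. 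Because $V \ge 0$, $V \to 0$ at infinity, and $F(v_n) \to m_\rho$, the split pieces satisfy $I_\la(v) + \sum_j I_{\infty,\la}(w^j) = m_\rho$; since $m_\rho$ is precisely the least energy (mountain pass) level $F_\infty(Z_\rho)$ for the limit problem at mass $\rho$ — and any nonzero solution $w^j$ of the limit equation has $I_{\infty,\la}(w^j) \ge m_{\|w^j\|_2} \ge$ the positive least-energy level, while $I_\la(v) \ge$ the corresponding level if $v \ne 0$ — a counting argument forces $v = 0$ and $k = 1$: i.e.\ $v_n = w^1(\cdot - y_n^1) + o(1)$ with $w^1 = Z_\rho$ up to translation. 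But then $\beta(v_n) = \beta(Z_\rho(\cdot - y_n^1)) + o(1) = y_n^1 + o(1) \to \infty$, contradicting $\beta(v_n) \to 0$. (The degenerate possibility $v\neq 0$ with $k=0$, i.e.\ strong convergence to a solution of the $V$-problem at level $m_\rho$, is handled by an energy estimate: $V\not\equiv0$ and $V\ge0$ would push $F(v)$ strictly above $m_\rho$ unless $v$ is radial-at-infinity in a way incompatible with the strict inequality \eqref{eq:newass}; this, or alternatively showing such a $v$ cannot exist because $m_{V,\rho}$-type considerations give a strict gap, closes the case.)

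The main obstacle — and the step I would spend the most care on — is the application of Lemma \ref{lemma-ekeland} to the class $\cD_0$: the negative-gradient deformation of $F|_{S_\rho}$ need not preserve $\beta \equiv 0$, so $\cD_0$ is not obviously homotopy-stable under the flow one wants to use. There are two standard fixes. One is to work in the radial class $\cD_r$ (which \emph{is} flow-invariant, using a radial gradient flow, and on which $\beta \equiv 0$ automatically) to get a radial Palais–Smale sequence at level $m_\rho = \ell^r_\rho$; then the radial Splitting Lemma has $|y_n^j|\to\infty$ forbidden unless the bubbles are absent, forcing $v_n \to Z_\rho$ strongly, and one separately checks that the level $L_\rho$ over all of $\cD_0$ (not just $\cD_r$) is $> m_\rho$ by comparing $\cD_0$ with a perturbed minimax that detects the barycentre — but this mixes two minimax values and needs an extra argument. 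The cleaner route, which I expect the authors take, is to observe that $\cD_0$ \emph{is} homotopy-stable in the sense of Lemma \ref{lemma-ekeland} because one only deforms leaving $K$ fixed and one is allowed to use \emph{any} deformation, in particular compositions of the gradient flow with the continuous ``recentering'' map $u \mapsto (-\beta(u))\star$-type corrections is not available (scaling doesn't move the barycentre to zero), so instead one notes that the assertion $L_\rho > m_\rho$ is equivalent to: no Palais–Smale sequence at level $m_\rho$ with $\beta \to 0$ can be minimizing for $\cD$ — and this is exactly what the Splitting-Lemma analysis above refutes, \emph{without} needing $\cD_0$ to be flow-invariant, because Lemma \ref{lemma-ekeland} is applied with $\cC = \cD$ and the constraint $\beta\equiv0$ is used only at the level of the sets $D_n$ and transported to $v_n$ via $\dist(v_n, D_n)\to0$. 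Making this last logical point airtight — that $\dist(v_n,D_n)\to0$ together with $\beta|_{D_n}\equiv0$ and continuity of $\beta$ really does give $\beta(v_n)\to0$, uniformly keeping $v_n$ bounded away from $0$ in $H^1$ so that $\beta$ is defined and continuous there — is the delicate quantitative heart of the proof.
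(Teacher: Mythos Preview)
Your overall architecture is right and matches the paper's: assume $L_\rho=m_\rho$, take $D_n\in\cD_0$ with $\max_{D_n}F\to m_\rho$, produce via Lemma~\ref{lemma-ekeland} a Palais--Smale sequence $v_n$ at level $m_\rho$ with $\dist(v_n,D_n)\to0$ (hence $\beta(v_n)\to0$), then run the Splitting Lemma and derive a contradiction. Your resolution of the homotopy-stability issue---apply Ekeland with the larger, obviously stable class and transport $\beta\equiv0$ only through $\dist(v_n,D_n)\to0$---is exactly what the paper does.

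However, there is a genuine gap in your choice of functional. You apply Lemma~\ref{lemma-ekeland} to $J=F$; the paper applies it to $\wt F_\infty(u,h):=F_\infty(h\star u)$ on $S_\rho\times\R$, with $C_n=D_n\times\{0\}$ (this is legitimate because $\max_{D_n}F_\infty\to m_\rho$ as well, since $F\ge F_\infty$). This single change fixes both weak points in your argument. First, boundedness: your appeal to ``standard Pohozaev/Nehari bookkeeping'' does not work---a generic Palais--Smale sequence for $F|_{S_\rho}$ need not be bounded, and nothing preceding this lemma in the paper supplies such a bound. The paper obtains boundedness precisely because the extra variable $h$ yields $\partial_h\wt F_\infty(u_n,h_n)\to0$, i.e.\ the sequence $v_n=h_n\star u_n$ asymptotically satisfies the Pohozaev identity for $F_\infty$, and this together with $F_\infty(v_n)\to m_\rho$ forces $\|v_n\|$ bounded (Jeanjean's trick). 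No smallness from $(V_1)$ is used here; the lemma holds for any $0\le V\not\equiv0$ with $V\to0$ at infinity.

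Second, the endgame: working with $F_\infty$, the weak limit $v$ solves the \emph{limit} equation, so the splitting cases $k\ge1$ are dispatched by the clean energy count $F_\infty(v)+\sum F_\infty(w^j)\ge 2m_\rho$, while $k=0$ forces $v_n\to Z_\rho$ strongly; then one computes $F(v_n)\to m_\rho+\tfrac12\int V Z_\rho^2>m_\rho$, contradicting $\max_{D_n}F\to m_\rho$ via $\dist(v_n,D_n)\to0$. In your version with $J=F$, the weak limit $v$ solves the $V$-equation, and your $k=0$ case---strong convergence to a $V$-solution $v$ with $F(v)=m_\rho$---is not a contradiction on its face: the sentence ``$V\not\equiv0$ would push $F(v)$ strictly above $m_\rho$'' is exactly what you would need to \emph{prove}, and it does not follow from $V\ge0$ alone since $m_\rho$ is a mountain-pass value, not a minimum of $F_\infty$ on $S_\rho$. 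Switching to $F_\infty$ in the Ekeland step makes this case transparent.
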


\begin{proof}
Using $V\ge 0$ and Lemma \ref{Lemma-MP-Geom}, we have
\begin{equation}\label{ineq-C}
\max_{u\in D}F(u)\ge \max_{u\in D}F_\infty(u)\ge\ell^0_\rho= m_\rho, \qquad\text{for all $D\in\cD_0$}.
\end{equation}
In order to show that the strict inequality holds in \eqref{ineq-C} we argue by contradiction and assume that there exists a sequence $D_n\in\cD_0$ such that
\begin{equation}
\label{contrad-hp}
\max_{u\in D_n}F(u)\to m_\rho.
\end{equation}
In view of (\ref{ineq-C}), we also have
$$\max_{u\in D_n}F_\infty(u)\to m_\rho$$
Adapting an argument from \cite[Lemma 2.4]{J} we consider the functional
\[
  \wt{F}_\infty: H^1(\R^N)\to\R,\quad \wt{F}_\infty(u,h) := F_\infty(h\star u)
\]
constrained to $M := S_\rho\times\R$. We apply Lemma \ref{lemma-ekeland} with
\[
  K := \{(h_1\star Z_\rho,0),(h_2\star Z_\rho,0)\}
\]
and
\[
  \cC := \{C\subset M: \text{$C$  compact, connected, $K\subset C$}\}
\]
Observe that 
\[
  \wt{\ell}_\rho := \inf_{C\in\cC}\max_{(u,h)\in C}\wt{F}_\infty(u,h) = \ell_\rho = m_\rho
\]
because $\cD\times\{0\}\subset\cC$, hence $\ell_\rho\ge\wt{\ell}_\rho$, and for any $C\in\cC$ we have 
$D:=\{h\star u:(u,h)\in C\}\in\cD$ and
$$
  \max_{(u,h)\in C}\wt{F}_\infty(u,h) = \max_{(u,h)\in C} F_\infty(h\star u) = \max_{v\in D}F_\infty(v)
$$
hence $\ell_\rho\le\wt{\ell}_\rho$. Therefore Lemma \ref{lemma-ekeland} yields a sequence $(u_n,h_n)\in H^1(\R^N)\times\R$ such that
\begin{enumerate}
\item $|\wt{F}_\infty (u_n,h_n)-m_\rho|\to 0$ as $n\to\infty$,
\item $\|\nabla_{S_\rho\times\R}\wt{F}_\infty(u_n,h_n)\|\to 0$ as $n\to\infty$,
\item $\dist((u_n,h_n),D_n\times\{0\})\to 0$ as $n\to\infty$.
\end{enumerate}
In particular, differentiation shows that $v_n:=h_n\star u_n\in H^1(\R^N)$ is a Palais-Smale sequence for $F_\infty$ on $S_\rho$ at level $m_\rho$ satisfying the Pohozaev identity for $F_\infty$, that is
\[
\begin{aligned}
&\frac{1}{2}\|\nabla v_n\|_2^2-\frac{1}{p}\|v_n\|_{p}^{p}\to m_\rho\\
&\|\nabla v_n\|_2^2+\mu_n\|v_n\|_2^2-\|v_n\|_{p}^{p}\to 0 \qquad\text{where }\mu_n:=-\frac{DF_\infty(v_n)[v_n]}{\rho^2}\\
&\|\nabla v_n\|_2^2-\frac{N(p-2)}{2p}\|v_n\|^{p}_{p}\to 0
\end{aligned}
\]
as $n\to\infty$. Moreover, due to point $(3)$ of Lemma \ref{lemma-ekeland} we have $\beta(v_n)\to 0$. As in the proof of \cite[Lemmas 2.4 and 2.5]{J} it follows that $v_n$ is bounded in $H^1(\R^N)$, hence, after passing to a subsequence, $v_n$ converges weakly in $H^1(\R^N)$ to a solution $v\in H^1(\R^N)$ of $-\Delta v+\mu v=|v|^{p-2}v$, and $\mu_n\to\mu>0$. As a consequence of (3) we have $\|v_n\|_2\to\rho$. We claim that 
\begin{equation}\label{conv-H1s}
  v_n\to Z_\rho \qquad\text{strongly in $H^1(\R^N)$.}
\end{equation}
In order to see this we first observe that
$$
\int_{\R^N}\nabla v_n\nabla \varphi \,dx+\int_{\R^N}V(x)v_n \varphi \,dx-\int_{\R^N}|v_n|^{p-2}v_n\varphi \,dx = -\mu_n \int_{\R^N}v_n \varphi \,dx + o(1)\|\varphi\|$$
for every $\varphi\in H^1(\R^N)$, hence $v_n$ is also a Palais-Smale sequence for $I_\mu$ at the level $m_\rho+\frac{\mu}{2}\rho^2$. As a consequence, the splitting Lemma \ref{splitting-lemma} implies
$$v_n=v+\sum_{j=1}^m u_j(\cdotp-y^j_n)+o(1)$$
in $H^1(\R^N)$, where $u_j\ne 0$ are solutions to 
$$-\Delta u_j+\mu u_j=|u_j|^{p-2}u_j$$
and $|y_n^j|\to\infty$. Moreover, setting $\ga:=\|v\|_2$ and $\alpha_j:=\|u_j\|_2$ there holds
$$\rho^2=\gamma^2+\sum_{j=1}^m \alpha_j^2.$$
In addition we have
$$F_\infty(v_n)+\frac{\mu}{2}\rho^2=F_\infty(v)+\frac{\mu}{2}\gamma^2+\sum_{j=1}^m \left(F_\infty(u_j)+\frac{\mu}{2}\alpha_j^2\right)+o(1),$$
which yields that
$$F_\infty(v_n)=F_\infty(v)+\sum_{j=1}^m F_\infty(u_j)+o(1).$$
If $m=0$, then $v_n\to v$ strongly in $H^1(\R^N)$ and \eqref{conv-H1s} follows because $\be(v)=\lim_{n\to\infty}\be(v_n)=0$. If $m=1$ and $v=0$, then $v_n=u_1(\cdotp-y^1_n)+o(1)$, which contradicts the fact that $\beta(v_n)\to 0$. If $m=1$ and $v\ne 0$ then by \eqref{eq:poho}
$$m_\rho+o(1)=F_\infty(v_n)=F_\infty(v)+F_\infty(u_1)+o(1)\ge m_\gamma+m_{\alpha_1} \ge 2m_\rho$$
which is also a contradiction. On the other hand, if $m\ge 2$, then again by \eqref{eq:poho}
$$m_\rho+o(1)=F_\infty(v_n) = F_\infty(v)+\sum_{j=1}^m F_\infty(u_j)+o(1) \ge 2 m_\rho.$$
As a consequence $m=0$ and \eqref{conv-H1s} is true. This implies however
\begin{equation}\notag
F(v_n) = F_{\infty}(v_n)+\frac{1}{2}\int_{\R^N}V(x)v_n^2 dx
\to m_\rho+\frac{1}{2}\int_{\R^N}V(x) Z_\rho^2 dx> m_\rho
\end{equation}
as $n\to\infty$, which contradicts \eqref{contrad-hp}.
\end{proof}

\begin{proposition}
\label{prop-MP-geom}
For any $\rho>0$ there holds $m_{V,\rho}\ge L_\rho$.
\end{proposition}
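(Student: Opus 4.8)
The plan is to compare the $(N{+}1)$-parameter min-max $m_{V,\rho}$ with the min-max $L_\rho$ over the class $\cD_0$. Fix an arbitrary $\ga\in\Ga_\rho$; I will produce a set $D\in\cD_0$ with $D\subseteq\ga(Q)$. Once this is done,
\[
  \max_{(y,h)\in Q}F(\ga(y,h))\ \ge\ \max_{u\in D}F(u)\ \ge\ L_\rho ,
\]
and taking the infimum over $\ga\in\Ga_\rho$ gives $m_{V,\rho}\ge L_\rho$. The candidate for $D$ is $\ga(\Theta)$, where $\Theta\subseteq Q$ is a compact connected set containing both $(0,h_1)$ and $(0,h_2)$ and on which $\be\circ\ga$ vanishes identically; then $D$ is compact and connected as a continuous image of $\Theta$, $h_1\star Z_\rho=\ga(0,h_1)\in D$, $h_2\star Z_\rho=\ga(0,h_2)\in D$, and $\be\equiv0$ on $D$, so that $D\in\cD_0$.

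First I would compute the barycenter along $\pa Q$. Since $h\star Z_\rho$ is radial, $\be(h\star Z_\rho)=0$; and since $h\star\big(Z_\rho(\cdotp-y)\big)$ is the translate of $h\star Z_\rho$ by the vector $e^{-h}y$, the covariance $\be(u_z)=\be(u)+z$ gives $\be(\ga(y,h))=e^{-h}y$ for all $(y,h)\in\pa Q$. Because $\ga$ takes values in $S_\rho\subset H^1(\R^N)\setminus\{0\}$ and $\be$ is continuous there, the map $\Phi:=\be\circ\ga\colon Q\to\R^N$ is continuous; hence $X:=\Phi^{-1}(0)$ is compact, and from the boundary formula $X\cap\pa Q=\{(0,h_1),(0,h_2)\}$ and $X\subset\{|x|<R\}\times[h_1,h_2]$.

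The key step is the topological claim that $(0,h_1)$ and $(0,h_2)$ belong to the same connected component $\Theta$ of $X$. I would prove it by a Brouwer degree argument. Extend $\Phi$ to $g\colon\R^N\times[h_1,h_2]\to\R^N$ by $g(y,h):=e^{-h}y$ for $|y|\ge R$; this is consistent with the boundary values just computed, $g^{-1}(0)=X$, $g\neq0$ for $|y|\ge R$, and $g(\cdotp,h_i)=e^{-h_i}\,\mathrm{id}$ has degree $1$ about the origin. Argue by contradiction: if the component of $(0,h_1)$ in $X$ does not contain $(0,h_2)$, then, since $(0,h_2)$ is the only zero of $g$ on the face $B_R\times\{h_2\}$, a standard separation property of compact metric spaces yields a relatively clopen $X_1\subseteq X$ with $(0,h_1)\in X_1$ and $X_1\cap\big(B_R\times\{h_2\}\big)=\emptyset$. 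Then $X_1$ is a compact subset of the relatively open set $\{|x|<R\}\times[h_1,h_2)$, at positive distance from $X\setminus X_1$, so a sufficiently thin tubular neighbourhood $\cO$ of $X_1$ satisfies $\cO\subset\{|x|<R\}\times[h_1,h_2)$, $g\neq0$ on the relative boundary $\pa\cO$, $0\in\cO_{h_1}$, and $\cO_{h_2}=\emptyset$. By the generalized homotopy invariance of the degree, $h\mapsto\deg\!\big(g(\cdotp,h),\cO_h,0\big)$ is constant on $[h_1,h_2]$; but it equals $\deg(e^{-h_1}\mathrm{id},\cO_{h_1},0)=1$ for $h=h_1$ and $0$ for $h=h_2$, a contradiction. (One could instead invoke Whyburn's lemma on continua together with the non-vanishing of these degrees, but the degree route is what directly exploits the explicit data $\Phi|_{\pa Q}=e^{-h}y$.)

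With $\Theta$ taken to be the component of $(0,h_1)$ in $X$ — which now also contains $(0,h_2)$ — the set $D:=\ga(\Theta)$ belongs to $\cD_0$ by the remarks in the first paragraph, and the displayed inequalities yield the proposition. I expect the main obstacle to be exactly the topological claim in the third paragraph: producing the clopen separator $X_1$ and arranging the tube $\cO$ so that the slice degrees $\deg(g(\cdotp,h),\cO_h,0)$ are defined for every $h$ and the generalized homotopy invariance applies without interference from the corners of $Q$. Everything else reduces to bookkeeping with the listed properties of $\be$ and of the $L^2$-preserving scaling $(h,u)\mapsto h\star u$.
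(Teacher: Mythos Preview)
Your proof is correct and follows the same strategy as the paper: use the barycenter map $\Phi=\beta\circ\gamma:Q\to\R^N$ and a degree/continuation argument to produce a compact connected zero set joining $(0,h_1)$ to $(0,h_2)$, whose image under $\gamma$ lies in $\cD_0$. The paper is terser --- it records $\deg(\beta\circ\gamma(\cdot,h),B_R,0)=1$ for every $h$ and then invokes ``degree theory'' for the existence of the connected set --- while you spell out the separation-plus-generalized-homotopy-invariance argument in full; note also that the paper parses $h\star Z_\rho(\cdot-y)$ as $(h\star Z_\rho)(\cdot-y)$, so the boundary formula there is $\beta=y$ rather than your $e^{-h}y$, but either reading gives a nonvanishing map of degree~$1$ and the argument is unaffected.
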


\begin{proof}
Given a function $\ga:Q=B_R(0)\times[h_1,h_2]\to S_\rho$ and $h\in[h_1,h_2]$ we consider the mapping
$$f_h:B_{R}\to\R^N,\quad y\mapsto \beta\circ\ga (y,h) .$$
We note that $f_{h_i}(y)=0$ if and only if $y=0$, for $i=1,2$, and $f_h(y)=y\ne 0$ for any $y\in \pa B_R$, so that
$\deg(f_h,B_R,0)=1$ for all $h\in[h_1,h_2]$. Therefore, by the degree theory, there exists a connected compact set $Q_0\subset Q$ such that $(0,h_i)\in Q_0$ for $i=1,2$, and $\beta\circ\ga(y,h)=0$, for any $(y,h)\in Q_0$. Hence the set $D_0:=\ga(Q_0)\in \cD_0$ satisfies
$$\max_{(y,h)\in Q} F(\ga(y,h))\ge\max_{u\in D_0}F(u)$$
which concludes the proof.
\end{proof}

\begin{proposition}
\label{prop-boundary}
For any $\rho>0$ and for any $\eps>0$ there exist $\bar{R}>0$ and $\bar{h}_1<0<\bar{h}_2$ such that for $Q=B_R\times[h_1,h_2]$ with $R\ge\bar{R}$, $h_1\le\bar{h}_1$, $h_2\ge\bar{h}_2$ the following holds:
\[
  \max_{(y,h)\in\pa Q} F(h\star Z_\rho(\cdotp-y))<m_\rho+\eps.
\]
\end{proposition}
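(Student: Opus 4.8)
The plan is to split, for every $(y,h)$, the functional as
\[
F(h\star Z_\rho(\cdot-y)) = F_\infty(h\star Z_\rho) + \frac12\int_{\R^N}V(x)\,(h\star Z_\rho)^2(x-y)\,dx,
\]
using the translation invariance of $F_\infty$, and to estimate the two terms separately. Set $g(h):=F_\infty(h\star Z_\rho)=\tfrac12 e^{2h}\|\nabla Z_\rho\|_2^2-\tfrac1p e^{\frac{N(p-2)}{2}h}\|Z_\rho\|_p^p$; since $\frac{N(p-2)}{2}>2$, $g$ has a unique critical point, necessarily the maximum at $h=0$ (because $Z_\rho$ is a constrained critical point of $F_\infty$ and $\frac{d}{dh}(h\star Z_\rho)|_{h=0}$ is tangent to $S_\rho$), so $g$ is increasing on $(-\infty,0]$, decreasing on $[0,+\infty)$, with $g(0)=m_\rho$, $g(h)\to0$ as $h\to-\infty$ and $g(h)\to-\infty$ as $h\to+\infty$. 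I will also use the trivial bound $\int V(x)(h\star Z_\rho)^2(x-y)\,dx\le\|V\|_\infty\|h\star Z_\rho\|_2^2=\|V\|_\infty\rho^2$ and, crucially, the strict inequality $\tfrac12\|V\|_\infty\rho^2<m_\rho$ contained in \eqref{eq:newass}.

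First I would dispose of the part of $\pa Q$ where $h$ is bounded away from $0$. Since $\tfrac12\|V\|_\infty\rho^2<m_\rho$ and $g\to0$ (resp.\ $-\infty$) at $-\infty$ (resp.\ $+\infty$), one can fix $\bar h_1<0<\bar h_2$ with $g(\bar h_1),\,g(\bar h_2)<m_\rho-\tfrac12\|V\|_\infty\rho^2$; monotonicity of $g$ then forces $g(h)<m_\rho-\tfrac12\|V\|_\infty\rho^2$ for all $h\le\bar h_1$ and all $h\ge\bar h_2$, whence
\[
F(h\star Z_\rho(\cdot-y))\le g(h)+\tfrac12\|V\|_\infty\rho^2<m_\rho<m_\rho+\eps
\]
for such $h$ and \emph{every} $y\in\R^N$. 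Writing $\pa Q=(\pa B_R\times[h_1,h_2])\cup(B_R\times\{h_1\})\cup(B_R\times\{h_2\})$, this already covers the two horizontal faces (since $h_1\le\bar h_1$, $h_2\ge\bar h_2$) and the piece $\pa B_R\times([h_1,h_2]\setminus(\bar h_1,\bar h_2))$ of the lateral face, for arbitrary $R>0$.

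The remaining, and central, task is the strip $\pa B_R\times[\bar h_1,\bar h_2]$, where $g(h)\le m_\rho$ so that it suffices to make the potential term $<\eps$ once $|y|=R$ is large, \emph{uniformly in} $h\in[\bar h_1,\bar h_2]$. The plan here is the change of variables $w=e^h(x-y)$, which rewrites the potential integral as $\int_{\R^N}V(e^{-h}w+y)\,Z_\rho^2(w)\,dw$. Given $\de>0$, pick $M$ with $\int_{|w|>M}Z_\rho^2<\de$, so the tail is $\le\|V\|_\infty\de$ uniformly in $h,y$; on $\{|w|\le M\}$ one has $|e^{-h}w+y|\ge|y|-e^{-\bar h_1}M$, where the lower bound $h\ge\bar h_1$ keeps $e^{-h}$ under control, so that choosing $\bar R$ large enough that $V<\de$ outside the ball of radius $\bar R-e^{-\bar h_1}M$ gives $\int_{|w|\le M}V(e^{-h}w+y)Z_\rho^2(w)\,dw\le\de\rho^2$ for all $|y|\ge\bar R$ and all $h\in[\bar h_1,\bar h_2]$. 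Thus the whole integral is $\le\de(\|V\|_\infty+\rho^2)$, and taking $\de$ small yields $F(h\star Z_\rho(\cdot-y))<m_\rho+\eps$ on this strip too; the three estimates then combine to give the statement with the $\bar R,\bar h_1,\bar h_2$ so produced.

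I expect the main obstacle to be precisely this last step: obtaining the smallness of the potential contribution \emph{uniformly} in $h$ over the compact range $[\bar h_1,\bar h_2]$, because the profile $(h\star Z_\rho)^2(\cdot-y)$ simultaneously translates (by $y$) and rescales (by $e^{-h}$); one must tame the rescaling via the lower bound on $h$ and only then exploit $V(x)\to0$. A secondary, purely organizational, point is to arrange the decomposition of $\pa Q$ so that the horizontal faces and the far portion of the lateral face are absorbed by the strict bound $\tfrac12\|V\|_\infty\rho^2<m_\rho$ from $(V_1)$, leaving only the compact central band of the lateral face to the argument at $|y|\to\infty$.
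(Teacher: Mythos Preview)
Your argument is correct and follows the same overall strategy as the paper: split $F=F_\infty+\text{(potential term)}$, control the faces $h=h_1,h_2$ via the behaviour of $g(h)=F_\infty(h\star Z_\rho)$ at $\pm\infty$, and on the lateral face make the potential term small as $|y|\to\infty$ by the change of variables $w=e^h(x-y)$ and a near/far splitting.

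The one substantive difference is how the bottom face $h=h_1$ is handled. You bound the potential term by $\tfrac12\|V\|_\infty\rho^2$ and then invoke the strict inequality $\tfrac12\|V\|_\infty\rho^2<m_\rho$ from \eqref{eq:newass} to force $F<m_\rho$ there. The paper instead observes that the potential term itself tends to $0$ as $h\to-\infty$, uniformly in $y$: after the same change of variables one gets $\int V(e^{-h}w+y)Z_\rho^2(w)\,dw$, and as $h\to-\infty$ the set $\{w:|e^{-h}w+y|\le M\}$ shrinks to a point, so the integral vanishes by dominated convergence. Thus the paper's proof of this proposition does \emph{not} use the quantitative size bound on $\|V\|_\infty$ at all, whereas your version does. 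Your route is a shade quicker and has the organisational bonus that the lateral estimate need only be carried out on the fixed compact band $[\bar h_1,\bar h_2]$ (rather than on $[h_1,h_2]$), but it makes the proposition depend on \eqref{eq:newass}, which is otherwise only needed later for the upper bound $m_{V,\rho}<2m_\rho$ and the convergence analysis.
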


\begin{proof}
We have
$$F(h\star Z_\rho(\cdotp-y))=F_\infty(h\star Z_\rho)+\frac{e^{hN}}{2}\int_{\R^N} V(x)Z_\rho(e^h(x-y))^2 dx$$
and
\[
\begin{aligned}
  F_\infty(h\star Z_\rho) &=\frac{e^{2h}}{2}\int_{\R^N}|\nabla Z_\rho|^2 dx-\frac{e^{\frac{N}{2}(p-2)h}}{p}\int_{\R^N} Z_\rho^{p}dx\\
      &=\begin{cases}
             O(-e^{\frac{N}{2}(p-2)h})\to-\infty &\quad\text{as $h\to\infty$}\\
             O(e^{2h})\to 0 &\quad\text{as $h\to-\infty$.}
           \end{cases}
\end{aligned}
\]
Moreover, there holds
\[
\begin{aligned}
e^{hN}\int_{\R^N} V(x)Z_\rho(e^h(x-y))^2 dx&=\int_{\R^N}V(e^{-h}x+y)Z_\rho^2(x) dx\\
&\begin{cases}
\to 0&\quad \text{as $h\to-\infty$ uniformly in $y\in \R^N$}\\
\le\|V\|_\infty\rho^2&\quad\text{for all $h\in\R$}
\end{cases}
\end{aligned}
\]
because $V(x)\to 0$ as $|x|\to\infty$. As a consequence we deduce
$$ F(h\star Z_\rho(\cdotp-y))\to-\infty \qquad\text{as $h\to\infty$}$$
uniformly in $y\in B_R$, any $R>0$, and
$$F(h\star Z_\rho(\cdotp-y))\to 0\qquad\text{as $h\to-\infty$}$$
uniformly in $y\in\R^N$. Therefore, we have
\begin{equation}\notag
\max_{y\in B_R,h\in\{h_1,\, h_2\}} F(h\star Z_\rho(\cdotp-y))<m_\rho
\end{equation}
provided $h_1<0$ is small enough and $h_2>0$ is large enough. Moreover, for $|y|=R$ large enough and $h\in[h_1,\, h_2]$, we choose $\alpha\in(0,1)$ such that $\alpha(1+e^{-h_1})<1$ so that we have
\begin{equation}\notag
\begin{aligned}
&e^{Nh}\int_{\R^N} V(x)Z_\rho^2(e^h(x-y)) dx\\\
&\hspace{1cm}
  \le e^{Nh}\int_{|x|>\alpha R}V(x)Z_\rho^2(e^h(x-y)) dx+e^{Nh}\int_{|x-y|>\alpha R e^{-h}}V(x)Z_\rho^2(e^h(x-y))^2 dx.
\end{aligned}
\end{equation}
The first integral is bounded by
$$e^{Nh}\int_{|x|>\alpha R}V(x)Z_\rho^2(e^h(x-y)) dx\le \|V\|_{L^\infty(|x|>\alpha R)}\int_{\R^N}Z_\rho^2 dx\to 0$$
as $R\to\infty$ and
\begin{equation}\notag
\begin{aligned}
e^{Nh}\int_{|x-y|>\alpha R e^{-h}}|V(x)|Z_\rho(e^h(x-y))^2 dx&=\int_{|\xi|>\alpha R}V(y+e^{-h}\xi)Z_\rho^2(\xi)d\xi\\
&\le\|V\|_\infty\int_{|\xi|>\alpha R}Z_\rho^2 dx\to 0
\end{aligned}
\end{equation}
as $R\to\infty$, which concludes the proof.
\end{proof}

By Propositions \ref{prop-boundary} and \ref{prop-MP-geom} we may choose $R>0$ and $h_1<0<h_2$ such that
\begin{equation}\label{mountain-pass-geom}
\max_{(y,h)\in\pa Q} F(h\star Z_\rho(\cdotp-y)) < m_{V,\rho}.
\end{equation}
This implies that $F$ has a linking geometry and that there exists a Palais-Smale sequence at the level $m_{V,\rho}$. The aim of the next Sections will be to prove that $m_{V,\rho}$ is a critical value for $F$. Moreover, for future purposes, we prove the following Lemma.

\begin{lemma}
\label{lemma-upper-m-eps}
If $|h_1|,\,h_2$ are large enough, then
$$m_{V,\rho}\le m_\rho+\frac{\|V\|_\infty}{2}\rho^2.$$
\end{lemma}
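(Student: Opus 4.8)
The idea is to exhibit a single, explicit member $\ga_0\in\Ga_\rho$ whose maximum of $F$ over $Q$ is controlled by $m_\rho+\frac{\|V\|_\infty}{2}\rho^2$; since $m_{V,\rho}$ is an infimum over $\Ga_\rho$, this bound then passes to $m_{V,\rho}$. The natural candidate is the ``model'' map
\[
  \ga_0(y,h):=h\star Z_\rho(\,\cdot\,-y),\qquad (y,h)\in Q=B_R\times[h_1,h_2],
\]
which is continuous, takes values in $S_\rho$ (the scaling $h\star(\cdot)$ and translations both preserve the $L^2$-norm), and obviously agrees with the prescribed boundary data on all of $Q$, hence in particular on $\pa Q$; so $\ga_0\in\Ga_\rho$. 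Therefore it suffices to estimate $\max_{(y,h)\in Q}F(\ga_0(y,h))$.

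\textbf{Key steps.} First I would reuse the splitting from the proof of Proposition~\ref{prop-boundary}:
\[
  F(h\star Z_\rho(\,\cdot\,-y))=F_\infty(h\star Z_\rho)+\frac{e^{hN}}{2}\int_{\R^N}V(x)Z_\rho\big(e^h(x-y)\big)^2\,dx.
\]
For the first term one uses that $Z_\rho$ is a mountain-pass critical point of $F_\infty$ on $S_\rho$ realizing the value $m_\rho$, and that along the scaling path $h\mapsto F_\infty(h\star Z_\rho)$ the maximum over $h\in\R$ is exactly $m_\rho$ (this is the one-dimensional structure underlying $m_\rho=\inf_{\si\in\Si_\rho}\max_t F_\infty(\si(t))$; concretely, $\frac{d}{dh}F_\infty(h\star Z_\rho)=e^{2h}\|\nabla Z_\rho\|_2^2-\frac{N(p-2)}{2p}e^{\frac N2(p-2)h}\|Z_\rho\|_p^p$ vanishes only at $h=0$, where the value is $m_\rho$, and the map is concave-then-decreasing in the relevant sense). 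Hence $F_\infty(h\star Z_\rho)\le m_\rho$ for every $h\in[h_1,h_2]$, uniformly in $y$. For the potential term, the change of variables $x\mapsto e^{-h}x+y$ used in Proposition~\ref{prop-boundary} gives
\[
  e^{hN}\int_{\R^N}V(x)Z_\rho\big(e^h(x-y)\big)^2\,dx=\int_{\R^N}V(e^{-h}x+y)Z_\rho^2(x)\,dx\le\|V\|_\infty\,\|Z_\rho\|_2^2=\|V\|_\infty\,\rho^2,
\]
valid for all $h\in\R$ and all $y$. Combining the two bounds yields $F(\ga_0(y,h))\le m_\rho+\frac{\|V\|_\infty}{2}\rho^2$ pointwise on $Q$, hence $\max_Q F(\ga_0)\le m_\rho+\frac{\|V\|_\infty}{2}\rho^2$, and therefore $m_{V,\rho}\le m_\rho+\frac{\|V\|_\infty}{2}\rho^2$.

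\textbf{Role of the hypothesis.} The assumption that $|h_1|$ and $h_2$ are large plays only an auxiliary, compatibility role: we need $Q$ to be \emph{admissible}, i.e.\ to be the box for which the linking geometry~\eqref{mountain-pass-geom} was established in Propositions~\ref{prop-boundary} and~\ref{prop-MP-geom}, so that $m_{V,\rho}$ and $\Ga_\rho$ are defined with respect to this $Q$; one simply takes $R\ge\bar R$, $h_1\le\bar h_1$, $h_2\ge\bar h_2$ as in Proposition~\ref{prop-boundary}. The estimate on $\ga_0$ itself is, as the computation shows, uniform in the size of $Q$, so enlarging the box costs nothing.

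\textbf{Main obstacle.} There is essentially no analytic obstacle here — the whole argument is two elementary estimates already assembled (in a more delicate, limiting form) inside the proof of Proposition~\ref{prop-boundary}. The only point requiring a line of care is the inequality $F_\infty(h\star Z_\rho)\le m_\rho$ for \emph{all} $h$, rather than just in the limits $h\to\pm\infty$ treated in Proposition~\ref{prop-boundary}; this is exactly the statement that the fiber map $h\mapsto F_\infty(h\star Z_\rho)$ attains its global maximum at the critical point $h=0$, which follows from the explicit monotonicity of its derivative computed above (or, alternatively, directly from the Pohozaev/scaling characterization of $m_\rho$). Once this is in hand, the rest is immediate.
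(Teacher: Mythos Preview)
Your proposal is correct and follows exactly the paper's approach: test $m_{V,\rho}$ with the model map $\ga_0(y,h)=h\star Z_\rho(\cdot-y)$, split $F$ into $F_\infty$ plus the potential term, and bound these by $m_\rho$ and $\frac12\|V\|_\infty\rho^2$ respectively. The paper's proof is in fact terser than yours; your added justification that the fiber map $h\mapsto F_\infty(h\star Z_\rho)$ attains its global maximum $m_\rho$ at $h=0$ is exactly the point the paper leaves implicit.
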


\begin{proof}
This follows from
\[
\begin{aligned}
  m_{V,\rho}&\le \max_{(y,h)\in Q}\left\{F_\infty(h\star Z_\rho(\cdotp-y))+\frac{1}{2}\int_{\R^N} V(x)(h\star Z_\rho)^2(x-y)dx\right\}\\
                  &\le m_\rho+\frac{1}{2}\|V\|_\infty\rho^2
\end{aligned}
\]
provided $|h_1|,\,h_2$ are large enough.
\end{proof}

\begin{remark}
In particular, under assumption $(V_1)$, Lemma \ref{lemma-upper-m-eps} yields that $m_{V,\rho}< 2m_\rho$.
\label{rem-m_Vrho}
\end{remark}

\subsection{A bounded Palais-Smale sequence}\label{sec-bd-PS-seq}
The aim of this section is to construct a bounded Palais-Smale sequence $v_{n}\in H^1(\R^N)$ of $F$ at the level $m_{V,\rho}$. Adapting the approach from \cite{J} we introduce the functional
\begin{equation}
  \wt{F}(u,h):=F(h\star u),\qquad\text{for all $(u,h)\in H^1(\R^N)\times \R$.}
\end{equation}
and define
$$\wt{\Ga}_\rho:=\{\wt{\ga}:Q\to S_\rho\times\R:\,\wt{\ga}(y,h):=(h\star Z_\rho(\cdotp-y),0),\,\forall\,(y,h)\in\pa Q\}$$
and
$$\wt{m}_{V,\rho}:=\inf_{\wt{\ga}\in\wt{\Ga}_\rho}\max_{(y,h)\in Q}\wt{F}(\wt{\ga}(y,h)).$$

\begin{lemma}
a) $\wt{m}_{V,\rho}=m_{V,\rho}.$

b) If $(u_n,h_n)_n$, is a (PS)$_c$ sequence for $\wt{F}$ then $(h_n\star u_n)_n$, is a (PS)$_c$ sequence for $F$. Conversely, if $(u_n)_n$ is a (PS)$_c$ sequence for $F$ then $(u_n,0)_n$, is a (PS)$_c$ sequence for $\wt{F}$.
\end{lemma}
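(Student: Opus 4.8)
For part (a) the strategy is to exhibit a correspondence between $\Ga_\rho$ and $\wt{\Ga}_\rho$ preserving the relevant maxima, built on the identities $0\star v=v$ and $\wt F(u,h)=F(h\star u)$. Given $\ga\in\Ga_\rho$, the map $\wt\ga:=(\ga,0):Q\to S_\rho\times\R$ lies in $\wt{\Ga}_\rho$ (on $\pa Q$, $\ga(y,h)=h\star Z_\rho(\cdot-y)$, hence $\wt\ga(y,h)=(h\star Z_\rho(\cdot-y),0)$) and $\wt F(\wt\ga(y,h))=F(0\star\ga(y,h))=F(\ga(y,h))$, so $\max_Q\wt F\circ\wt\ga=\max_Q F\circ\ga$ and, taking infima, $\wt m_{V,\rho}\le m_{V,\rho}$. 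Conversely, given $\wt\ga=(\wt\ga_1,\wt\ga_2)\in\wt{\Ga}_\rho$, set $\ga(y,h):=\wt\ga_2(y,h)\star\wt\ga_1(y,h)$: it is continuous by joint continuity of $(s,w)\mapsto s\star w$ on $\R\times H^1(\R^N)$, takes values in $S_\rho$ because $\star$ preserves $\|\cdot\|_2$, reduces to $h\star Z_\rho(\cdot-y)$ on $\pa Q$, so $\ga\in\Ga_\rho$, and $F(\ga(y,h))=\wt F(\wt\ga_1(y,h),\wt\ga_2(y,h))=\wt F(\wt\ga(y,h))$; hence $m_{V,\rho}\le\wt m_{V,\rho}$. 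This proves (a).

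For part (b), I would first record that $\wt F\in C^1(H^1(\R^N)\times\R)$ --- transparent from
\[
  \wt F(u,h)=\tfrac12 e^{2h}\|\nabla u\|_2^2+\tfrac12\int_{\R^N}V(e^{-h}x)u^2\,dx-\tfrac1p e^{\frac{N(p-2)}{2}h}\|u\|_p^p
\]
--- and that, with $v:=h\star u$,
\[
  \pa_u\wt F(u,h)[\varphi]=F'(v)[h\star\varphi],\qquad \pa_h\wt F(u,h)=\|\nabla v\|_2^2-\tfrac{N(p-2)}{2p}\|v\|_p^p+\tfrac N2\int_{\R^N}Vv^2+\int_{\R^N}Vv\,(x\cdot\nabla v),
\]
the last two terms (the Pohozaev contribution of $V$) being finite under $(V_1)$ since $|V(x)x|=W(x)\le\|W\|_\infty$. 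The decisive structural fact is that $h\star\cdot$ is a linear isometry of $L^2(\R^N)$ with inverse $(-h)\star\cdot$, so it maps $T_uS_\rho$ isomorphically onto $T_{h\star u}S_\rho$, with $\|(-h)\star\eta\|^2=\|\eta\|_2^2+e^{-2h}\|\nabla\eta\|_2^2$.

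Now let $(u_n,h_n)$ be a (PS)$_c$ sequence for $\wt F$ on $S_\rho\times\R$ and $v_n:=h_n\star u_n$. Then $F(v_n)=\wt F(u_n,h_n)\to c$, and for $\eta\in T_{v_n}S_\rho$ with $\|\eta\|\le1$ the element $\varphi_n:=(-h_n)\star\eta\in T_{u_n}S_\rho$ satisfies $h_n\star\varphi_n=\eta$, whence
\[
  |F'(v_n)[\eta]|=|\pa_u\wt F(u_n,h_n)[\varphi_n]|\le\|\nabla_{S_\rho\times\R}\wt F(u_n,h_n)\|\,\sqrt{1+e^{-2h_n}}.
\]
Thus $v_n$ is a (PS)$_c$ sequence for $F$ on $S_\rho$ provided $\{h_n\}$ stays bounded below, and this boundedness is the step I expect to be the main obstacle. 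To obtain it I would use that $\pa_h\wt F(u_n,h_n)\to0$ (since $(0,1)$ is a unit tangent vector), so that the two scalar relations $\wt F(u_n,h_n)\to c$ and $\pa_h\wt F(u_n,h_n)\to0$ form a linear system, up to the bounded potential contributions, for $\|\nabla v_n\|_2^2=e^{2h_n}\|\nabla u_n\|_2^2$ and $\|v_n\|_p^p=e^{\frac{N(p-2)}{2}h_n}\|u_n\|_p^p$ that is invertible because $p>2+\tfrac4N$ (so $\tfrac{N(p-2)}{2}\ne2$); combined with $|\int_{\R^N}Vv_n(x\cdot\nabla v_n)|\le\|W\|_\infty\rho\|\nabla v_n\|_2$ and the bound on $\|W\|_\infty$ in \eqref{eq:V_1} this first forces $v_n$ bounded in $H^1(\R^N)$, and then --- using that in the application $c=m_{V,\rho}$ obeys $m_\rho<c<2m_\rho$ (Proposition~\ref{prop-MP-geom}, Lemmas~\ref{C-m-ineq} and~\ref{lemma-upper-m-eps} with \eqref{eq:newass}) --- rules out $h_n\to-\infty$.

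For the converse, if $(u_n)$ is a (PS)$_c$ sequence for $F$ then $\wt F(u_n,0)=F(u_n)\to c$; since $\wt F(\cdot,0)=F$, the $S_\rho$-component of $\nabla_{S_\rho\times\R}\wt F(u_n,0)$ is precisely $\nabla_{S_\rho}F(u_n)\to0$, while the $\R$-component is $\pa_h\wt F(u_n,0)$, i.e.\ the Pohozaev functional of $F$ evaluated at $u_n$ (which vanishes asymptotically along the Palais-Smale sequences produced by the min-max scheme). Since only the first implication is used in the sequel, I would concentrate on it; within it, the uniform control of the scaling parameters $h_n$ --- what prevents the distortion factor $\sqrt{1+e^{-2h_n}}$ from blowing up --- is the delicate point.
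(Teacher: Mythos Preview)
Your argument for part (a) is correct and coincides with the paper's: the inclusion $\Ga_\rho\times\{0\}\subset\wt\Ga_\rho$ gives one inequality, and the map $\wt\ga=(\wt\ga_1,\wt\ga_2)\mapsto \wt\ga_2\star\wt\ga_1$ gives the other.

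For part (b) the paper does not argue at all; it simply refers to \cite{J}. The point you are missing is that the lemma, as used both here and in \cite{J}, is applied \emph{only} to the Palais--Smale sequence produced by the Ekeland-type Proposition~\ref{prop-Ekeland}, and that proposition already yields
\[
\dist\big((u_n,h_n),\, \wt g_n(Q)\big)\to 0\quad\text{with}\quad \wt g_n(Q)\subset S_\rho\times\{0\},
\]
so $h_n\to 0$ is given for free. With $h_n$ bounded, your distortion factor $\sqrt{1+e^{-2h_n}}$ is bounded and the conclusion $\nabla_{S_\rho}F(v_n)\to0$ follows at once from your chain-rule identity $F'(v_n)[\eta]=\pa_u\wt F(u_n,h_n)[(-h_n)\star\eta]$. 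This is exactly how Proposition~\ref{prop-bounded-PS} proceeds (see the line ``so that $h_n\to 0$ as $n\to\infty$'' there).

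Your detour through the two scalar relations $\wt F\to c$ and $\pa_h\wt F\to 0$ does show that $v_n=h_n\star u_n$ is bounded in $H^1$ (indeed this is the content of the boundedness argument in Proposition~\ref{prop-bounded-PS}), but it does \emph{not} by itself rule out $h_n\to-\infty$: those relations involve only the quantities $\|\nabla v_n\|_2$, $\|v_n\|_p$ and the potential terms in $v_n$, and contain no independent information on $h_n$. The sentence ``using that in the application $c=m_{V,\rho}$ \dots rules out $h_n\to-\infty$'' is therefore a gap. Fortunately it is also unnecessary, for the reason above.

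The same remark applies to the converse: as you observe, $\pa_h\wt F(u_n,0)$ is the Pohozaev functional, and there is no reason for it to vanish along an \emph{arbitrary} (PS)$_c$ sequence of $F$. Again, in \cite{J} and in this paper the converse is only invoked (implicitly) for sequences arising from the min-max, where this extra information is available; the paper in fact never uses the converse direction explicitly.
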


\begin{proof}
a) Since $\Ga_\rho\times\{0\}\subset\wt{\Ga}_\rho$, then $m_{V,\rho}\ge \wt{m}_{V,\rho}$. On the other hand, for any $\wt{\ga}=(u,h)\in\wt{\Ga}_\rho$, the function $\ga:=h\star u\in\Ga_\rho$ satisfies
$$\max_{(y,t)\in Q}\wt{F}(\wt{\ga}(y,t))=\max_{(y,h)\in Q}F(\ga(y,t))$$
so that $m_{V,\rho}\le \wt{m}_{V,\rho}$.\\

b) This has been proved in \cite{J}.
\end{proof}

\begin{proposition}
Let $\wt{g}_n\in\wt{\Ga}_\rho$ be a sequence such that
$$\max_{(y,h)\in Q}\wt{F}(\wt{g}_n(y,h))\le m_{V,\rho}+\frac{1}{n}.$$
Then there exist a sequence $(u_{n},h_{n})\in S_\rho\times\R$ and $\tilde{c}>0$ such that
$$m_{V,\rho}-\frac{1}{n} \le \wt{F}(u_{n},h_{n}) \le m_{V,\rho}+\frac{1}{n}$$
$$\min_{(y,h)\in Q}\|(u_{n},h_{n})-\wt{g}_n(y,h)\|_{H^1(\R^N)\times\R}\le\frac{\tilde{c}}{\sqrt{n}}$$
$$\|\nabla_{S_\rho\times\R}\wt{F}(u_{n},h_{n})\|\le\frac{\tilde{c}}{\sqrt{n}}.$$
\label{prop-Ekeland}
\end{proposition}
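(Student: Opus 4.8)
The plan is to apply the abstract Ekeland-type deformation lemma, Lemma~\ref{lemma-ekeland}, on the Hilbert manifold $M:=S_\rho\times\R$ to the functional $\wt F$, with the compact set $K$ being the image of $\pa Q$ under any (hence every) $\wt\ga\in\wt\Ga_\rho$, namely $K=\{(h\star Z_\rho(\cdotp-y),0):(y,h)\in\pa Q\}$. First I would verify that the family
\[
  \cC:=\{\wt\ga(Q):\wt\ga\in\wt\Ga_\rho\}
\]
is homotopy-stable relative to $K$: given $C=\wt\ga(Q)\in\cC$ and a deformation $\eta:C\times[0,1]\to M$ fixing $K$, the composition $(y,h)\mapsto\eta(\wt\ga(y,h),1)$ is again a continuous map $Q\to S_\rho\times\R$ agreeing with $(h\star Z_\rho(\cdotp-y),0)$ on $\pa Q$ (because $\eta$ fixes $K\supset\wt\ga(\pa Q)$), so its image lies in $\cC$. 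Hence $\cC$ is invariant under such deformations.

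Next I would check the strict inequality hypothesis $\max_K\wt F<c:=\inf_{C\in\cC}\max_C\wt F$. By definition $c=\wt m_{V,\rho}$, which equals $m_{V,\rho}$ by the preceding Lemma part~a). On $K$ we have $\wt F(h\star Z_\rho(\cdotp-y),0)=F(h\star Z_\rho(\cdotp-y))$, and by the choice of $R,h_1,h_2$ recorded in \eqref{mountain-pass-geom} this is strictly less than $m_{V,\rho}$ on $\pa Q$; so $\max_K\wt F<m_{V,\rho}=c$, and moreover $c\in\R$ since $\wt F$ is bounded on each $\wt\ga(Q)$ and bounded below away from $-\infty$ along the relevant min-max (finiteness of $m_{V,\rho}$ follows from Lemma~\ref{lemma-upper-m-eps} together with Proposition~\ref{prop-MP-geom} and Lemma~\ref{C-m-ineq}, which give $m_\rho<L_\rho\le m_{V,\rho}\le m_\rho+\tfrac12\|V\|_\infty\rho^2<\infty$).

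With these hypotheses in place, I apply Lemma~\ref{lemma-ekeland} with $\si_n=1/n$ and $C_n=\wt g_n(Q)\in\cC$, which by assumption satisfies $0\le\max_{C_n}\wt F-c\le 1/n$ (the lower bound $\max_{C_n}\wt F\ge c$ being automatic from the definition of $c$). The lemma then produces $(u_n,h_n)\in M=S_\rho\times\R$ with $|\wt F(u_n,h_n)-c|\le\tilde c\,\si_n$, $\|\nabla_M\wt F(u_n,h_n)\|\le\tilde c\sqrt{\si_n}$, and $\dist((u_n,h_n),C_n)\le\tilde c\sqrt{\si_n}$. Rewriting $c=m_{V,\rho}$, $\si_n=1/n$, and $\dist((u_n,h_n),C_n)=\min_{(y,h)\in Q}\|(u_n,h_n)-\wt g_n(y,h)\|_{H^1\times\R}$, and absorbing the harmless constant into $\tilde c$, gives exactly the three claimed estimates. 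The only points requiring genuine care — and the main obstacle — are the verification that $\cC$ is genuinely homotopy-stable in the precise sense of Lemma~\ref{lemma-ekeland} (one must be careful that deformations are only required to fix $K$, not all of $\pa Q$'s parametrization, which is fine here since $\wt\ga|_{\pa Q}$ is the \emph{same} fixed map for every $\wt\ga\in\wt\Ga_\rho$) and the bookkeeping identifying the abstract constants and the distance functional with the statement; the finiteness and strict-gap conditions are immediate from the results already established.
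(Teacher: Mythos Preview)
Your proof is correct and follows essentially the same route as the paper: apply Lemma~\ref{lemma-ekeland} on $M=S_\rho\times\R$ with $J=\wt F$, $K=\{(h\star Z_\rho(\cdotp-y),0):(y,h)\in\pa Q\}$, $\cC=\{\wt\ga(Q):\wt\ga\in\wt\Ga_\rho\}$, and $C_n=\wt g_n(Q)$, $\si_n=1/n$. You simply spell out the verification of homotopy-stability and of the strict gap $\max_K\wt F<c$ (via \eqref{mountain-pass-geom}) that the paper leaves implicit; one minor bookkeeping slip is that property~(1) of Lemma~\ref{lemma-ekeland} already gives $|\wt F(u_n,h_n)-c|\le\si_n$ without the extra factor $\tilde c$, which is exactly what the statement requires.
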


The last inequality means:
$$\big|D\wt{F}(u_{n},h_{n})[(z,s)]\big|\le\frac{\tilde{c}}{\sqrt{n}}(\|z\|_{H^1(\R^N)}+|s|)$$
for all
$$(z,s)\in \bigg\{(z,s)\in H^1(\R^N)\times\R:\,\int_{\R^N}zu_{n}=0\bigg\}.$$

\begin{proof}
This follows from Lemma \ref{lemma-ekeland} applied to $\wt{F}$ with
\begin{equation}\notag
\begin{aligned}
&M:=S_\rho\times\R,\qquad K:=\{(h\star Z_\rho(\cdotp-y),0):\,(y,h)\in \pa Q\}\\
&\mathcal{C}=\wt{\Ga}_\rho,\qquad
C_n:=\{\wt{g}_n(y,h):(y,h)\in Q\}.
\end{aligned}
\end{equation}
\end{proof}

As a consequence we obtain a bounded Palais-Smale sequence for $F$ at the level $m_{V,\rho}$.

\begin{proposition}\label{prop:bddPSseq}
There exists a \textbf{bounded} sequence $(v_{n})_n$ in $S_\rho$ such that
\begin{equation}\label{Palais-Smale-meps}
  F(v_n)\to m_{V,\rho},\qquad \nabla_{S_\rho}F(v_n)\to 0
\end{equation}
and
\begin{equation}
\label{Pohozaev-id}
\|\nabla v_{n}\|_2^2-\frac{N(p-2)}{2p}\|v_{n}\|_{p}^{p}+\frac{1}{2}\int_{\R^N}V(x)(N v_{n}^2+2v_n\nabla v_n\cdotp x)dx\to 0
\end{equation}
as $n\to\infty$. Moreover, the sequence of Lagrange multipliers
\begin{equation}
\label{def-lambda_n}
\la_n:=-\frac{DF(v_n)[v_n]}{\rho^2}
\end{equation}
admits a subsequence $\la_n\to\la$, with
\begin{equation}\label{eq:lafromab}
0< \lambda  \le \left(2q + \frac{8}{N(p-2)-4} \min\left(1,\frac{2}{N}\right)\right)\,\frac{m_\rho}{\rho^2}.
\end{equation}
\label{prop-bounded-PS}
\end{proposition}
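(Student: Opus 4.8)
The plan is to combine the linking geometry with the abstract Ekeland-type Lemma \ref{lemma-ekeland}, applied to the scaled functional $\wt F$ on $M=S_\rho\times\R$, and then pull back to $F$ on $S_\rho$ via part b) of the preceding lemma. First I would take a minimizing sequence $\wt g_n\in\wt\Ga_\rho$ with $\max_Q \wt F(\wt g_n(\cdot))\le m_{V,\rho}+\tfrac1n$; Proposition \ref{prop-Ekeland} then produces $(u_n,h_n)\in S_\rho\times\R$ with $\wt F(u_n,h_n)\to m_{V,\rho}$ and $\|\nabla_{S_\rho\times\R}\wt F(u_n,h_n)\|\to0$. Setting $v_n:=h_n\star u_n$, part b) of the lemma gives that $(v_n)$ is a (PS)$_{m_{V,\rho}}$ sequence for $F$ on $S_\rho$, which is the first line \eqref{Palais-Smale-meps}. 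Differentiating $\wt F$ in the $h$-variable at $(u_n,h_n)$ and using that $\partial_h\wt F\to0$ yields exactly the Pohozaev-type relation \eqref{Pohozaev-id}: indeed $\partial_h\big[F(h\star u)\big]$ evaluated at the critical scale reproduces $\|\nabla v_n\|_2^2-\tfrac{N(p-2)}{2p}\|v_n\|_p^p$ plus the potential term $\tfrac12\int V(x)(Nv_n^2+2v_n\nabla v_n\cdot x)\,dx$, since the $V$-term in $F(h\star u)$ is $\tfrac{e^{hN}}2\int V(x)u^2(e^h x)\,dx$ whose $h$-derivative, after the change of variables back to $v_n$, produces precisely $N v_n^2 + 2 v_n\nabla v_n\cdot x$ under the integral.

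Next I would establish boundedness of $(v_n)$ in $H^1(\R^N)$. This is where the mass-supercriticality and the constraint really enter. From \eqref{Palais-Smale-meps} one has $F(v_n)=\tfrac12\|\nabla v_n\|_2^2+\tfrac12\int V v_n^2-\tfrac1p\|v_n\|_p^p\to m_{V,\rho}$, and from the definition of $\la_n$ together with $\nabla_{S_\rho}F(v_n)\to0$ one gets $\|\nabla v_n\|_2^2+\int V v_n^2+\la_n\rho^2-\|v_n\|_p^p=o(1)$. Combining these two with the Pohozaev relation \eqref{Pohozaev-id} gives three (asymptotic) linear relations among $\|\nabla v_n\|_2^2$, $\|v_n\|_p^p$, $\la_n\rho^2$, and the $V$-integrals; since $N(p-2)>4$, eliminating $\|v_n\|_p^p$ between the Pohozaev identity and the energy identity expresses $\|\nabla v_n\|_2^2$ in terms of $m_{V,\rho}$ and controlled potential terms (the potential terms being bounded by $\|V\|_\infty\rho^2$ and $\|W\|_\infty\rho\|\nabla v_n\|_2$ via Cauchy--Schwarz), which, because the coefficient of $\|\nabla v_n\|_2^2$ is strictly positive for $p>2+\tfrac4N$ and the $\|W\|_\infty$-bound in $(V_1)$ is small enough, forces $\limsup\|\nabla v_n\|_2^2<\infty$. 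With the $L^2$-norm fixed at $\rho$ this gives boundedness in $H^1$.

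Finally, for the Lagrange multipliers: boundedness of $(v_n)$ and $\la_n\rho^2=\|v_n\|_p^p-\|\nabla v_n\|_2^2-\int Vv_n^2+o(1)$ show $(\la_n)$ is bounded, so up to a subsequence $\la_n\to\la$. Positivity and the explicit upper bound \eqref{eq:lafromab} follow by inserting the (asymptotic) values of $\|\nabla v_n\|_2^2$ and $\|v_n\|_p^p$ obtained above — using $m_{V,\rho}\le m_\rho+\tfrac{\|V\|_\infty}{2}\rho^2$ from Lemma \ref{lemma-upper-m-eps} and the bound on $\|V\|_\infty$ from \eqref{eq:newass}, together with \eqref{eq:poho} which relates $m_\rho$, $\la_\rho$, $\rho$ — into the linear system; the factor $\min(1,2/N)$ in \eqref{eq:lafromab} comes from the two alternative ways of bounding the term $\int V(x)(Nv_n^2+2v_n\nabla v_n\cdot x)\,dx$ (either crudely by $N\|V\|_\infty\rho^2$ plus a Young-inequality absorption, or more efficiently when $N\le 2$). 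I expect the main obstacle to be the bookkeeping in this linear elimination: one must be careful that the coefficient multiplying $\|\nabla v_n\|_2^2$ stays positive and that the potential contributions, estimated through $(V_1)$, are genuinely absorbed rather than merely bounded, so that the quantitative bound \eqref{eq:lafromab} comes out with the stated constants; positivity $\la>0$ in particular requires that $\|V\|_\infty$ is strictly below the threshold in \eqref{eq:newass}, which is precisely why that hypothesis is imposed.
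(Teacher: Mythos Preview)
Your outline follows the paper's proof essentially step for step: apply Proposition~\ref{prop-Ekeland} to the auxiliary functional $\wt F$ on $S_\rho\times\R$, set $v_n=h_n\star u_n$, read off the Pohozaev relation from $\partial_h\wt F(u_n,h_n)\to0$, and then solve the asymptotic linear system in $a_n=\|\nabla v_n\|_2^2$, $b_n=\|v_n\|_p^p$, $c_n=\int Vv_n^2$, $d_n=\int Vv_n\nabla v_n\cdot x$ to get boundedness and the bounds on $\lambda$.

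Two attributions are swapped, though, and would trip you up if you tried to write out the details. First, boundedness of $a_n$ does \emph{not} need the smallness bound \eqref{eq:V_1} on $\|W\|_\infty$: since the leading coefficient $N(p-2)-4$ is positive, the quadratic inequality $(N(p-2)-4)a_n - 4\|W\|_\infty\rho\, a_n^{1/2} - C\le0$ bounds $a_n$ for any finite $\|W\|_\infty$. The specific bound \eqref{eq:V_1} is what forces $\lambda>0$, via $(p-2)|d|\le m_\rho(2N-p(N-2))$; it is \emph{not} the $\|V\|_\infty$ bound \eqref{eq:newass} that does this, contrary to your last sentence. Second, the factor $\min(1,2/N)$ in \eqref{eq:lafromab} is not a consequence of two alternative estimates of the $V$-integral: it is simply the $\theta$ appearing in the hypothesis \eqref{eq:newass}, which enters when you bound $c\le\|V\|_\infty\rho^2<2\theta m_\rho$ and $m_{V,\rho}\le m_\rho+\tfrac12\|V\|_\infty\rho^2<(1+\theta)m_\rho$ in the expression for $\lambda\rho^2$.
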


\begin{proof}
First we choose a sequence $g_n\in\Ga$ such that
$$\max_{(y,h)\in Q}F(g_n(y,h))\le m_{V,\rho}+\frac{1}{n}.$$
Since $F(u)=F(|u|)$, for any $u\in H^1(\R^N)$, we can assume that $g_n(y,h)\ge 0$ almost everywhere in $\R^N$. Applying Proposition \ref{prop-Ekeland} to $\wt{g}_n(y,h):=(g_n(y,h),0)\in\wt{\Ga}_\rho$, we can prove the existence of a sequence $(u_n,h_n)\in H^1(\R^N)\times \R$ such that $F(h_n\star u_n)\to m_{V,\rho}$. Note that, by Proposition \ref{prop-Ekeland}, we have
$$\min_{(y,h)\in Q}\|(u_{n},h_{n})-\wt{g}_n(y,h)\|_{H^1(\R^N)\times\R}\le\frac{\tilde{c}}{\sqrt{n}}$$
so that $h_n\to 0$ as $n\to\infty$ and there exists $(y_n,\bar{h}_n)\in B_R\times[h_1,h_2]$ such that
\begin{equation}
\label{v>=0}
v_n:=h_{n}\star u_{n}=h_{n}\star u_{n}-g_n(y_n,\bar{h}_n)+g_n(y_n,\bar{h}_n)=g_n(y_n,\bar{h}_n)+o(1)
\end{equation}
as $n\to \infty$, with $g_n(y_n,\bar{h}_n)\ge 0$ a. e. in $\R^N$. Moreovoer, $(v_n)_n$ is a Palais-Smale sequence, again by Proposition \ref{prop-Ekeland}.
Similarly we have for $w\in H^1(\R^N)$, setting $\wt{w}_n:=(-h_n)\star w$, that
$$D (F-F_\infty)(v_{n})[w]=\int_{\R^N}V(e^{-h_{n}} x)u_{n}\wt{w}_n,$$
which implies
$$DF(v_{n})[w] =D\wt{F}(u_{n},h_{n})[(\wt{w}_n,0)]+o(1)\|\wt{w}_n\|.$$
Moreover, it is easy to see that $\int_{\R^N}v_{n}w=0$ is equivalent to $\int_{\R^N}u_{n}\wt{w}=0$. Since $\|\wt{w}_n\|^2_{H^1(\R^N)}\le 2\|w\|^2_{H^1(\R^N)}$ for $n$ large, we have \eqref{Palais-Smale-meps}.\\

Now we prove (\ref{Pohozaev-id}). Using
$$D\wt{F}(u_{n},h_{n})[(0,1)]\to 0,\quad h_n\to0,\qquad\text{as $n\to\infty$}$$
computing
\[
\begin{aligned}
&\pa_h\left(\int_{\R^N}V(x)e^{Nh}u^2(e^{h} x)dx\right)=\int_{\R^N}V(x)(N e^{Nh}u^2(e^h x)+2e^{Nh}u(e^h x)\nabla u(e^h x)\cdotp e^h x) dx
\end{aligned}
\]
and
$$\pa_h F_\infty(h\star u)=\|\nabla v\|_2^2-\frac{N(p-2)}{2p}\|v\|_{p}^{p},$$
we obtain
\[
\|\nabla v_{n}\|_2^2-\frac{N(p-2)}{2p}\|v_{n}\|_{p}^{p}+\frac12\int_{\R^N}V(x)(Nv_n^2+2v_n\nabla v_n\cdotp x) dx\to 0,\qquad\text{as $n\to\infty$}
\]
i.e.\ $v_n$ almost satisfies the Pohozaev identity.\\

In order to see that $v_n$ is bounded we set
\beq\label{eq:def-a_n}
a_n:=\|\nabla v_n\|_2^2,\qquad b_n:=\|v_n\|_{p}^{p},\qquad c_n:=\int_{\R^N}V(x)v_n^2 dx,\qquad d_n:=\int_{\R^N}V(x)v_n\nabla v_n\cdotp x dx
\eeq
relations (\ref{Palais-Smale-meps}), (\ref{Pohozaev-id}) and (\ref{def-lambda_n}) can be expressed in the form
\begin{eqnarray}\label{PS-v_n}
a_n+c_n-\frac{2}{p}b_n= 2m_{V,\rho}+o(1)\qquad\text{as $n\to\infty$}
\end{eqnarray}
\begin{eqnarray}\label{Pohozaev-new}
a_n-\frac{N(p-2)}{2p}b_n+\frac{N}{2}c_n+d_n=o(1)\qquad\text{as $n\to\infty$}
\end{eqnarray}
\begin{eqnarray}\label{lambda_n-new}
a_n+c_n+\la_n \rho^2= b_n+o(1)\left(a_n^{\frac12}+1\right)\qquad\text{as $n\to\infty$}
\end{eqnarray}
so that
\[
\frac{N(p-2)-4}{2p}b_n= 2m_{V,\rho}+\frac{N-2}{2}c_n+d_n+o(1).
\]
As a consequence
\begin{equation}\label{a_n-bd}
\begin{aligned}
a_n&=\frac{4}{N(p-2)-4}\left(2m_{V,\rho}+\frac{N-2}{2}c_n+d_n\right)-c_n+2m_{V,\rho}+o(1)\\
&=\frac{N(p-2)}{N(p-2)-4}2m_{V,\rho}+\frac{N(4-p)}{N(p-2)-4}c_n+\frac{4}{N(p-2)-4}d_n+o(1)
\end{aligned}
\end{equation}
so that, using the inequalities
$$c_n\le \|V\|_\infty\rho^2< 2m_\rho,\qquad m_{V,\rho}< 2m_\rho$$
we have with $W(x)=V(x)|x|$:
\[
\begin{aligned}
0&\le (N(p-2)-4)a_n= 2N(p-2)m_{V,\rho}+(4-p)Nc_n+4d_n+o(1)\\
&\le 4N(p-2)m_\rho+6N m_\rho+4\|W\|_\infty\rho a_n^{\frac{1}{2}}+o(1)
\end{aligned}
\]
or equivalently
$$(N(p-2)-4)a_n-4\|W\|_\infty\rho a_n^{\frac{1}{2}}-2N(2p-1)m_\rho+o(1)\le 0.$$
This implies
\begin{equation}\label{bound-a_n}
(N(p-2)-4)a_n^{\frac{1}{2}}\le 2\|W\|_\infty\rho+\sqrt{4\|W\|_\infty^2\rho^2+2N(2p-1)m_\rho(N(p-2)-4)}+o(1),
\end{equation}
so that $a_n$ is bounded.\\

To conclude, we prove that the sequence $\la_n$ admits a subsequence that converges to a positive bounded limit. Since $a_n$ and $c_n$ are bounded, then also $d_n$ is bounded, due to the H\"{o}lder inequality, therefore also $b_n$ and $\la_n$ are. Up to a subsequence, we can assume that
$$a_n\to a\ge 0,\qquad b_n\to b\ge 0,\qquad c_n\to c\ge 0,\qquad d_n\to d\in\R,\qquad\la_n\to\la\in\R.$$
Passing to the limit in (\ref{PS-v_n}), (\ref{Pohozaev-new}) and (\ref{lambda_n-new}), we have
\[
a+c-\frac{2}{p}b= 2m_{V,\rho}
\]
\[
a-\frac{N(p-2)}{2p}b+\frac{N}{2}c+d=0
\]
\[
a+c+\la \rho^2= b
\]
which imply
\begin{equation}\label{lambda>0}
\begin{aligned}
\la\rho^2&=\frac{p-2}{p}b-2m_{V,\rho}\\
&=\frac{2(p-2)}{N(p-2)-4}\left(2m_{V,\rho}+\frac{N-2}{2}c+d\right)-2m_{V,\rho}\\
&=\frac{p(2-N)+2N}{N(p-2)-4}2m_{V,\rho}+\frac{(N-2)(p-2)}{N(p-2)-4}c+\frac{2(p-2)}{N(p-2)-4}d\\
&>\frac{p(2-N)+2N}{N(p-2)-4}2m_{\rho}+\frac{(N-2)(p-2)}{N(p-2)-4}c+\frac{2(p-2)}{N(p-2)-4}d
\end{aligned}
\end{equation}
so that $\la>0$ provided
$$(p-2)|d|\le m_\rho(2N-p(N-2)).$$
Using \eqref{bound-a_n} it is possible to see that
\begin{equation}\notag
\begin{aligned}
(p-2)|d|&\le (p-2)\|W\|_\infty\rho a^{\frac{1}{2}}\\
&\le(p-2)\|W\|_\infty\rho \frac{2\|W\|_\infty\rho+\sqrt{4\|W\|_\infty^2\rho^2+2N(2p-1)m_\rho(N(p-2)-4)}}{N(p-2)-4}\\
&\le m_\rho(p(2-N)+2N)
\end{aligned}
\end{equation}
provided
$$\|W\|_\infty\le\frac{m_\rho^{\frac{1}{2}}}{\rho}
\left(\frac{(N(p-2)-4)(p(2-N)+2N)^2}{2(p-2)\big(N(2p-1)(p-2)+2(p(2-N)+2N)\big)}\right)^{\frac12}.$$
In the same way, since 
\[
\begin{split}
\la\rho^2&=\frac{2N-p(N-2)}{N(p-2)-4}2m_{V,\rho}+\frac{(N-2)(p-2)}{N(p-2)-4}c+\frac{2(p-2)}{N(p-2)-4}d,\\
(p-2)|d|&\le m_\rho(2N-p(N-2))\\
c&\le \|V\|_\infty\rho^2,
\end{split}
\]
using \eqref{eq:newass} and $m_{V,\rho} \le m_\rho + \frac12\|V\|_\infty\rho^2$ we deduce
($\theta = \bruttateta$) 
\[
\begin{split}
\la\rho^2&\le\left((4 + 2\theta)\frac{2N-p(N-2)}{N(p-2)-4}+ 2\theta\frac{(N-2)(p-2)}{N(p-2)-4}\right) m_\rho\\
&=\left((4 + 2\theta)\frac{2N-p(N-2)}{N(p-2)-4} - 2\theta\frac{2N-p(N-2)-4}{N(p-2)-4}\right) m_\rho\\
&= \left(2q + \frac{8\theta}{N(p-2)-4} \right)\,m_\rho.
\end{split}
\]
\end{proof}

\subsection{Convergence of the Palais-Smale sequence}\label{sec-convergence-PS}
Since $v_{n}$ is bounded in $H^1(\R^N)$, after passing to a subsequence it converges weakly in $H^1(\R^N)$ to 
$v\in H^1(\R^N)$. Due to (\ref{v>=0}) we have $g_n(y_n,\bar{h}_n)\to v$ weakly in $H^1(\R^N)$ and pointwise a.e.\ in $\R^N$, so that $v\ge 0$, since $g_n(y_n,\bar{h}_n)\ge 0$. Moreover, by weak convergence, $v$ is a weak solution  of
\begin{equation}\label{eq:v}
-\Delta v+(\la+V)v=|v|^{p-2}v
\end{equation}
such that $\|v\|_2\le \rho$. It remains to prove that $\|v\|_2=\rho$.\\

We note that, since
$$
\int_{\R^N}\nabla v_n\nabla \varphi \,dx+\int_{\R^N}V(x)v_n \varphi \,dx-\int_{\R^N}|v_n|^{p-2}v_n\varphi \,dx = -\la_n \int_{\R^N}v_n \varphi \,dx + o(1)\|\varphi\|$$
for every $\varphi\in H^1(\R^N)$, $v_n$ is also a Palais-Smale sequence for $I_\la$ at level $m_{V,\rho}+\frac{\la}{2}\rho^2$, therefore, by the splitting Lemma \ref{splitting-lemma}, we have
$$v_n=v+\sum_{j=1}^k w^j(\cdotp-x^j_n)+o(1),$$
being $w^j$ solutions to 
$$-\Delta w^j+\lambda w^j=|w^j|^{p-2}w^j$$
and $|x^j_n|\to\infty$. We note that, if $k=0$, then $v_n\to v$ strongly in $H^1(\R^N)$, hence $\|v\|_2=\rho$ and we are done, thus we can assume that $k\ge 1$, or equivalently $\ga:=\|v\|_2<\rho$.\\

First we exclude the case $v=0$. In fact, if $v=0$ and $k=1$, we would have $w^1>0$ and $\|w^1\|_2=\rho$ so that \eqref{splitting-energy} would give $m_{V,\rho}=m_\rho$, which is not possible due to Proposition \ref{prop-MP-geom}. On the other hand, if $k\ge 2$, using $F_\infty(w^j)\ge m_{\alpha_j}$
and
\begin{equation}\label{monotonicity-m_rho}
  m_{\alpha}>m_{\beta} \qquad\text{if $\alpha<\beta$}
\end{equation}
the condition $F(v_n)\to m_{V,\rho}$ and (\ref{splitting-energy}) would give
$$2 m_\rho\le m_{V,\rho}\le m_\rho+\frac{\|V\|_\infty\rho^2}{2},$$
which contradicts assumption $(V_1)$.\\

Therefore from now on we will assume that $v\ne 0$. From $F(v_n)\to m_{V,\rho}$ we deduce
\[
m_{V,\rho}+\frac{\la}{2}\rho^2=F(v)+\frac{\la}{2}\ga^2+\sum_{j=1}^k F_\infty(w^j)+\frac{\la}{2}\sum_{j=1}^k \alpha_j^2
\]
where $\alpha_j:=\|w_j\|_2^2$. Using $F_\infty(w^j)\ge m_{\alpha_j}$, \eqref{monotonicity-m_rho}, the fact that, by (\ref{splitting-norm}),
$$\rho^2=\ga^2+\sum_{j=1}^k \alpha_j^2,$$
and the splitting lemma, we have
\[
m_{V,\rho}\ge F(v)+\sum_{j=1}^k m_{\alpha_j}\ge F(v)+m_{{\alpha}}\ge F(v)+m_\rho
\]
where, for easier notation, we write $\alpha = \max_j \alpha_j$. Moreover,
using the equation for $v$, it is easy to check that 
\[
I_{\lambda}(v) = \max_{t>0}  I_{\lambda}(tv).
\]

Let $\beta>0$ be such that
\[
\lambda_\beta = \lambda,
\]
according to \eqref{eq:lambda_rho}. 
By Appendix \ref{app:a}, $Z_\beta$ satisfies the limit equation with multiplier 
$\lambda$, and 
$\beta\le \alpha$ (actually, it is possible to show that $\beta = \alpha$ and $w = Z_\beta$, even though we do not need to use this). We also write 
\begin{equation}\label{eq:fincontr}
\theta = \min\left(1,\frac{2}{N}\right),
\end{equation}
so that assumption \eqref{eq:newass} reads $\|V\|_\infty\rho^2 < 2\theta m_\rho$. Using repeatedly Appendix \ref{app:a} and the above arguments, we have
\[
\begin{split}
(1+\theta)m_\rho &> m_\rho + \frac12\|V\|_\infty\rho^2 \ge  m_{V,\rho} \ge m_\alpha + F(v) = m_\alpha + I_{\lambda}(v) - 
\frac{\lambda}{2}\gamma^2 = m_\alpha + \max_{t>0} I_{\lambda}(tv) - 
\frac{\lambda}{2}\gamma^2\\
&\ge m_\alpha + \max_{t>0} I_{\infty,\lambda}(tv) - 
\frac{\lambda}{2}(\rho^2-\alpha^2) \ge m_\alpha + I_{\infty,\lambda}(Z_\beta) - 
\frac{\lambda}{2}(\rho^2-\alpha^2) \\
&\ge m_\alpha + m_\beta + 
\frac{\lambda}{2}(\alpha^2 + \beta^2 - \rho^2). 
\end{split}
\]
Since $m_\beta\ge m_\alpha > m_\rho$ and $\theta\le1$, we deduce that 
\[
\alpha^2 + \beta^2 - \rho^2<0.
\] 
Using Proposition \ref{prop-bounded-PS} and \eqref{eq:poho} we obtain
\[
(1+\theta)m_\rho > \frac{\rho^q}{\alpha^q}m_\rho + \frac{\rho^q}{\beta^q}m_\rho - 
\frac{\rho^2-\alpha^2 - \beta^2}{\rho^2} \left(q + \frac{4\theta}{N(p-2)-4} \right)\,m_\rho,
\]
that is
\begin{equation}\label{eq:penultima}
1+q+\frac{N(p-2)\theta}{N(p-2)-4}> \frac{\rho^q}{\alpha^q} + \frac{\rho^q}{\beta^q} +
\left(q + \frac{4\theta}{N(p-2)-4} \right)
\left( \frac{\alpha^2}{\rho^2}+\frac{\beta^2}{\rho^2}\right) .
\end{equation}
On the other hand, by elementary arguments we have that
\begin{multline*}
\min\left\{x^{-q/2} + y^{-q/2} + A(x + y) : x,y>0,\ x+y \le 1 \right\}\\
=
2\min\left\{x^{-q/2} + Ax : x>0,\ x\le \frac12 \right\}
=
\begin{cases}
(2 + q)\left(\dfrac{2A}{q}\right)^{q/(2 + q)}  & \text{if }\left(\dfrac{q}{2A}\right)^{2/(2 + q)} \le \dfrac12,\smallskip\\
2^{(q+2)/2} + A  & \text{if }\left(\dfrac{q}{2A}\right)^{2/(2 + q)} \ge \dfrac12.
\end{cases}
\end{multline*}
Now, 
\[
\left(\dfrac{q}{2A}\right)^{2/(2 + q)} \le \dfrac12
\qquad\implies\qquad
(2 + q)\left(\dfrac{2A}{q}\right)^{q/(2 + q)} \ge (2+q)2^{q/2},
\]
and \eqref{eq:penultima} becomes
\[
1+q+\frac{N(p-2)\theta}{N(p-2)-4} > (2+q)2^{q/2}
\]
and finally
\[
\theta > \frac{N(p-2)-4}{N(p-2)}\left((2+q)2^{q/2}-1-q\right)
\]
Using the elementary inequality
\[
(2+q)2^{q/2}-1-q > q\ln2  + 1 >\frac12 q+ 1 \qquad \text{for every }q>0
\]
we obtain
\[
\theta > \frac{N(p-2)-4}{N(p-2)}\cdot \frac{2(p-2)}{N(p-2)-4} = \frac{2}{N},
\]
in contradiction with \eqref{eq:fincontr}.

%
%
%
%
%

\section{The proof of Theorem \ref{thm:main} in case $(V_2)$}\label{sec-pole}

The strategy is similar to the one used in the case of $(V_1)$, in particular we use the same linking structure as in Section \ref{sec-rad}. First we observe that the results from Section \ref{sec-rad} up to and including Proposition \ref{prop-MP-geom} still hold true without any changes in the proof. The following result provides an upper bound for $m_{V,\rho}$.

\begin{lemma}\label{lem:stimabaseconpolo}
If
$$\|V\|_{\frac{N}{2}}<\frac{2N(p-2)}{N(p-2)-4}\left(\left(\frac{N+2}{N}\right)^{\frac{N(p-2)-4}{N(p-2)}}-1\right)\frac{m_\rho}{\|Z_\rho\|_{2^\star}^2}$$
then $m_{V,\rho}<\left(\frac{N+2}{N}\right)m_\rho$.
\end{lemma}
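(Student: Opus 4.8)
The plan is to bound $m_{V,\rho}$ from above by evaluating the min-max functional on the natural comparison map $\ga_0\in\Ga_\rho$ given by $\ga_0(y,h):=h\star Z_\rho(\cdot-y)$, exactly as in the proof of Lemma~\ref{lemma-upper-m-eps}; the only new point is that the potential term must now be controlled in $L^{N/2}$ instead of $L^\infty$. Since $F_\infty$ is invariant under translations, for every $(y,h)\in Q$ one has
\[
F(\ga_0(y,h))=F_\infty(h\star Z_\rho)+\frac12\int_{\R^N}V(x)\,(h\star Z_\rho)^2(x-y)\,dx ,
\]
and, by Hölder's inequality (recall $N\ge3$, so $2^*<\infty$) together with the scaling identity $\|h\star u\|_{2^*}=e^{h}\|u\|_{2^*}$, the last integral is at most $\|V\|_{N/2}\,\|Z_\rho\|_{2^*}^2\,e^{2h}$, uniformly in $y\in B_R$. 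Hence
\[
m_{V,\rho}\ \le\ \max_{(y,h)\in Q}F(\ga_0(y,h))\ \le\ \max_{t>0}g(t),\qquad g(t):=\frac{t^2}{2}\Big(\|\nabla Z_\rho\|_2^2+\|V\|_{N/2}\|Z_\rho\|_{2^*}^2\Big)-\frac{t^{\frac N2(p-2)}}{p}\,\|Z_\rho\|_p^p .
\]

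Next I would compute $\max_{t>0}g(t)$ in closed form. Write $\gamma:=\tfrac N2(p-2)>2$. The Pohozaev identity for $Z_\rho$ gives $\|\nabla Z_\rho\|_2^2=\tfrac{\gamma}{p}\|Z_\rho\|_p^p$, so $m_\rho=F_\infty(Z_\rho)=\big(\tfrac12-\tfrac1\gamma\big)\|\nabla Z_\rho\|_2^2$, i.e.
\[
\|\nabla Z_\rho\|_2^2=\frac{2\gamma}{\gamma-2}\,m_\rho=\frac{2N(p-2)}{N(p-2)-4}\,m_\rho .
\]
The function $g$ is increasing then decreasing on $(0,\infty)$, and its unique critical point $t_*$ satisfies $t_*^{\gamma-2}=1+\|V\|_{N/2}\|Z_\rho\|_{2^*}^2/\|\nabla Z_\rho\|_2^2$; substituting back and using $\tfrac{\gamma}{\gamma-2}=\tfrac{N(p-2)}{N(p-2)-4}$ yields
\[
\max_{t>0}g(t)=g(t_*)=\left(1+\frac{\big(N(p-2)-4\big)\,\|V\|_{N/2}\,\|Z_\rho\|_{2^*}^2}{2N(p-2)\,m_\rho}\right)^{\!\frac{N(p-2)}{N(p-2)-4}}\!m_\rho .
\]

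Finally I would simply unravel the desired inequality: $g(t_*)<\tfrac{N+2}{N}m_\rho$ is, after raising both sides to the power $\tfrac{N(p-2)-4}{N(p-2)}$, equivalent to
\[
1+\frac{\big(N(p-2)-4\big)\,\|V\|_{N/2}\,\|Z_\rho\|_{2^*}^2}{2N(p-2)\,m_\rho}\ <\ \left(\frac{N+2}{N}\right)^{\frac{N(p-2)-4}{N(p-2)}},
\]
which is exactly the hypothesis on $\|V\|_{N/2}$. I do not expect a genuine obstacle here: the argument is a one-line choice of test map followed by an elementary one-variable maximization. The only delicate point is the bookkeeping in the last two displays — in particular the identity $\|\nabla Z_\rho\|_2^2=\tfrac{2N(p-2)}{N(p-2)-4}m_\rho$ and the exponent $\tfrac{N(p-2)}{N(p-2)-4}$, both of which come from the scaling relations recalled in \eqref{eq:poho} and Appendix~\ref{app:a} — and it is precisely this computation that fixes the somewhat unusual form of the constant appearing in the statement and in assumption $(V_2)$.
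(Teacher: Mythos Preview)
Your proposal is correct and follows essentially the same approach as the paper: test the min-max with $\gamma_0(y,h)=h\star Z_\rho(\cdot-y)$, bound the potential term via H\"older in $L^{N/2}$--$L^{2^*}$, and reduce to a one-variable maximization in $t=e^h$ using the Pohozaev relations $\|\nabla Z_\rho\|_2^2=\tfrac{\gamma}{p}\|Z_\rho\|_p^p$ and $\|\nabla Z_\rho\|_2^2=\tfrac{2N(p-2)}{N(p-2)-4}m_\rho$. The paper writes the computation in the variable $h$ rather than $t$ and arrives at $m_{V,\rho}\le m_\rho e^{\frac{N}{2}(p-2)h_0}$, but this is exactly your $g(t_*)=s^{\gamma/(\gamma-2)}m_\rho$ in different notation.
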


\begin{proof}
We have
\begin{equation}\label{eq:Nge3}
\begin{aligned}
m_{V,\rho}&\le \max_{h\in\R,\ y\in\R^N} F(h\star Z_\rho(\cdot-y))=\max_{h\in\R,\ y\in\R^N} \left(F_\infty(h\star Z_\rho)+\frac{1}{2}\int_{\R^N}V(x+y)(h\star Z_\rho)^2 dx\right)\\
  &\le\max_{h\in\R} \left(F_\infty(h\star Z_\rho)+\frac{1}{2}\|V\|_{\frac{N}{2}}\|h\star Z_\rho\|_{2^\star}^2\right)\\
&=\max_{h\in\R}\left(\frac{e^{2h}}{2}(\|\nabla Z_\rho\|_2^2+\|V\|_{\frac{N}{2}}\|Z_\rho\|_{2^\star}^2)-\frac{e^{\frac{N}{2}h(p-2)}}{p}\|Z_\rho\|_{p}^{p}\right).
\end{aligned}
\end{equation}
Differentiating with respect to $h$ and using the fact that
$$m_\rho=\frac{N(p-2)-4}{4p}\|Z_\rho\|_{p}^{p}$$
we see that
$$m_{V,\rho}\le m_\rho e^{\frac{N}{2}(p-2)h_0},$$
where $h_0$ is such that
$$e^{2h_0}(\|\nabla Z_\rho\|_2^2+\|V\|_{\frac{N}{2}}\|Z_\rho\|_{2^\star}^2)-\frac{N(p-2)}{2p}e^{\frac{N}{2}(p-2)h_0}\|Z_\rho\|_{p}^{p}=0.$$
Therefore  $m_{V,\rho}<(1+\theta)m_\rho$ if
$$e^{\frac{N}{2}(p-2)h_0}<1+\theta$$
or equivalently
$$\left(\frac{\|\nabla Z_\rho\|_2^2+\|V\|_{\frac{N}{2}}\|Z_\rho\|_{2^\star}^2}{m_\rho}\frac{N(p-2)-4}{2N(p-2)}\right)^{\frac{N(p-2)}{N(p-2)-4}}<1+\theta,$$
which is the case if
$$\|V\|_{\frac{N}{2}}<\frac{2N(p-2)}{N(p-2)-4}((1+\theta)^{\frac{N(p-2)-4}{N(p-2)}}-1)\frac{m_\rho}{\|Z_\rho\|_{2^\star}^2},$$
because 
\[
\frac{\|\nabla Z_\rho\|_2^2}{m_\rho} = \frac{2N(p-2)}{N(p-2)-4}.
\]
\end{proof}

Propositions \ref{prop-boundary}, \ref{prop:bddPSseq} also hold true, but their proofs require some adaptation.

\begin{proof}[Proof of Proposition \ref{prop-boundary} if $(V_2)$ holds]
Since for all $(y,h)\in\R^N\times\R$
\begin{eqnarray*}
F(h\star Z_\rho(\cdotp-y))&=&F_\infty(h\star Z_\rho)+\int_{\R^N}V(x)h\star Z_\rho^2 (x-y)dx\\
&=&
\frac{e^{2h}}{2}\int_{\R^N}|\nabla Z_\rho(x)|^2dx-\frac{e^{{N\over 2}(p-2)h}}{p} \int_{\R^N}Z_\rho^{p}(x)dx+ \int_{\R^N}V(x)(h\star Z_\rho)^2(x-y)dx,
\end{eqnarray*}
and taking into account
$$
0\le \int_{\R^N}V(x)(h\star Z_\rho)^2(x-y) dx\le \|V\|_{N/2}\, \|h\star Z_\rho\|_{2^\star}^2=Ce^{2h},
$$
it is readily seen that
$$
\lim_{h\to-\infty}F(h\star Z_\rho(\cdotp-y))=0\quad\mbox{ and }\quad
\lim_{h\to+\infty}F(h\star Z_\rho(\cdotp-y))=-\infty\quad\mbox{uniformly in }y\in\R^N.
$$
Therefore we can fix $h_1<0<h_2$ so that
\beq\label{eq1}
\sup_{(y,h)\in\R^N\times\{h_1,h_2\}}F(h\star Z_\rho(\cdotp-y))\le {m\over 2}.
\eeq
Now, observe that
\[
\begin{aligned}
  &\max_{h\in[h_1,h_2]}\int_{\R^N}V(x)(h\star Z_\rho)^2(x-y)dx\\
  &\hspace{1cm}
    = \max_{h\in[h_1,h_2]}\left(\int_{B_{|y|\over 2}(0)}V(x)(h\star Z_\rho)^2(x-y)dx+ \int_{\R^N\setminus B_{|y|/ 2}(0)}V(x)(h\star Z_\rho)^2(x-y)dx\right)\\
  &\hspace{1cm}
    \le \|V\|_{N\over 2}  \max_{h\in[h_1,h_2]}\|h\star Z_\rho\|^2_{L^{2^*}(\R^N\setminus B_{|y|/ 2}(0))}
         + \|V\|_{L^{\frac{N}{2}}(\R^N\setminus B_{|y|/ 2}(0))}  \max_{h\in[h_1,h_2]}\|h\star Z_\rho\|^2_{2^*}.
\end{aligned}
\]
hence
\[
\begin{aligned}
  &\limsup_{|y|\to+\infty}\max_{y\in[h_1,h_2]}F(h\star Z_\rho(\cdotp-y))\\
  &\hspace{1cm}
    =\limsup_{|y|\to+\infty}\max_{y\in[h_1,h_2]}\left(F_\infty(h\star Z_\rho)+\frac{1}{2}\int_{\R^N}V(x)(h\star Z_\rho)^2(x-y)dx\right)\le m_\rho
\end{aligned}
\]
which, together with $F(Z_\rho(\cdotp-y))>m_\rho$ for all $y\in\R^N$, yields
\beq\label{eq2}
\lim_{|y|\to\infty}\max_{h\in[h_1,h_2]}F(h\star Z_\rho(\cdotp-y))=m_\rho.
\eeq
Now Proposition \ref{prop-boundary} follows from \eqref{eq1} and \eqref{eq2}.
\end{proof}

%
%
\begin{proof}[Proof of Proposition \ref{prop:bddPSseq} if $(V_2)$ holds]
The existence of a Palais-Smale sequence follows as in Section \ref{sec-rad}. 
It remains to prove that the sequence is bounded, and that estimate \eqref{eq:lafromab} holds true. Using the notation from \eqref{eq:def-a_n} we observe that
\begin{equation}\label{eq:Nge3bis}
c_n=\int_{\R^N} V(x)v_n^2 dx\le\|V\|_{\frac{N}{2}}\|v_n\|_{2^\star}^2\le A^2\|V\|_{\frac{N}{2}}a_n,
\end{equation}
where $A$ is the Aubin-Talenti constant as in $(V_2)$. Concerning $d_n$, we observe that, 
by the Gagliardo-Nirenberg-Sobolev inequality,
\begin{equation}\label{eq:Nge3tris}
|d_n|\le \|W\|_N\|v_n\|_{2^\star}\|\nabla v_n\|_2\le A\|W\|_N\|\nabla v_n\|_2^2
\end{equation}
so that using \eqref{a_n-bd}, we have
\begin{equation}
\label{bound-a_n-pole}
a_n(N(p-2)-4-2NA^2\|V\|_{\frac{N}{2}}-4\|W\|_N A)\le 4N(p-2)m_\rho.
\end{equation}
This implies that $a_n$ is bounded thanks to $(V_2)$, which guarantees that
$$N(p-2)-4-2NA^2\|V\|_{\frac{N}{2}}-4\|W\|_N A>0.$$
In order to show that $\la_n$ admits a subsequence which converges to $\la>0$, we can argue as in the proof of Proposition \ref{prop-bounded-PS} and see that, up to a subsequence, $\la_n\to\la\in\R$ satisfying \eqref{lambda>0}, so that $\la>0$ provided
$$(p-2)|d|<m_\rho(2N-p(N-2)).$$
Using (\ref{bound-a_n-pole}), it is possible to see that
\[
\begin{aligned}
(p-2)|d|&\le (p-2)\|W\|_N A a\\
&\le\|W\|_N \frac{4AN(p-2)^2 m_\rho}{N(p-2)-4-2N A^2\|V\|_{\frac{N}{2}}-4A\|W\|_N}\\
&\le m_\rho(2N-p(N-2)),
\end{aligned}
\]
if $(V_2)$ holds. Finally, \eqref{eq:Nge3bis} yields
\[
\begin{aligned}
c&\le \|V\|_{\frac{N}{2}}a \\
&\le\|V\|_{\frac{N}{2}}  \frac{4NA^2(p-2) m_\rho}{N(p-2)-4-2N A^2\|V\|_{\frac{N}{2}}-4A\|W\|_N}\\
&\le\frac{4}{N}m_\rho,
\end{aligned}
\]
if $(V_2)$ holds. Then Lemma \ref{lem:stimabaseconpolo} and \eqref{lambda>0} provide \eqref{eq:lafromab} also in this case (recall that here we are assuming $N\ge3$, so that $\bruttateta=\frac{2}{N}$). The proposition follows.
\end{proof}

The conclusion of the proof of Theorem \ref{thm:main} under assumption $(V_2)$ is exactly the same as in the case of assumption $(V_1)$.

\begin{remark}\label{rem:N<3inassV2}
In case $N=1,2$ we have that $2^*=\infty$, and the above estimates can be modified in a straightforward way (although with much heavier calculations). Let $p\le q <\infty$ (or even $q=\infty$ in dimension $N=1$). Then in \eqref{eq:Nge3} we can use the fact that
\[
\int_{\R^N}V(x+y)(h\star Z_\rho)^2 \le \|V\|_{\frac{q}{q-2}} \|h\star Z_\rho\|^2_{q}
= e^{{N\over q}(q-2)h} \|V\|_{\frac{q}{q-2}} \|Z_\rho\|^2_{q},
\]
while \eqref{eq:Nge3bis}, \eqref{eq:Nge3tris} become
\[
c_n\le\|V\|_{\frac{q}{q-2}}\|v_n\|_{q}^2,
\qquad
|d_n|\le \|W\|_{\frac{2q}{q-2}}\|v_n\|_{q}\|\nabla v_n\|_2,
\]
respectively, so that both terms can be estimated in terms of $a_n$ and $\rho$ by using the Gagliardo-Nirenberg inequality
\[
\|v\|_q \le C_{N,q} \|v\|_2^{1-\frac{N(q-2)}{2q}}\|\nabla v\|_2^{\frac{N(q-2)}{2q}}.
\]
As a consequence, an assumption analogous to $(V_2)$, holding for any $N\ge1$, could be written in terms of 
$\|V\|_{\frac{q}{q-2}}$, $\|W\|_{\frac{2q}{q-2}}$. 
\end{remark}

\section{Proof of Theorem \ref{thm:rad}}\label{sec:rad}
In the radial case we can apply a mountain pass argument as in \cite{J}. We set $S^r_\rho:=S_\rho\cap H^1_{rad}(\R^N)$ and
$$\Si_\rho^r:=\{\si\in\cC([0,1],S^r_\rho):\si(0)=h_1\star Z_\rho,\si(1)=h_2\star Z_\rho\}$$
with $h_1<0<h_2$ such that
\begin{equation}
\max\{F(h_1\star Z_\rho),\, F(h_2\star Z_\rho)\}<m_\rho.
\end{equation}
The choice is possible because
\[
  F(h\star Z_\rho) \to \begin{cases} 0&\qquad\text{as $h\to-\infty$}\\ -\infty&\qquad\text{as $h\to\infty$.} \end{cases}
\]
We look for a critical point of $F$ at the level
$$m_{V,\rho}^r:=\inf_{\si\in\Si_\rho^r}\max_{t\in[0,1]} F(\si(t)).$$

\begin{lemma}
Under the assumptions of Theorem \ref{thm:rad}, we have $m_{V,\rho}^r>m_\rho$.
\end{lemma}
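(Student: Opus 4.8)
The statement to prove is $m_{V,\rho}^r>m_\rho$ in the radial setting of Theorem \ref{thm:rad}. The plan is to mimic the strategy used for Lemma \ref{C-m-ineq} and the proof of Proposition \ref{prop-MP-geom}, but now within the radial class, where the compact embedding $H^1_{rad}(\R^N)\hookrightarrow L^p(\R^N)$ ($N\ge2$) is available and makes life considerably easier. First I would record the easy inequality $m_{V,\rho}^r\ge m_\rho$: since $V\ge0$ we have $F\ge F_\infty$ on $S_\rho$, and Lemma \ref{Lemma-MP-Geom} (via the identification of the min-max value $m_\rho$ over paths in $\Si_\rho$, restricted to the radial class which still contains the one-parameter family $\{h\star Z_\rho:h\in[h_1,h_2]\}$) gives $\max_{t}F_\infty(\si(t))\ge m_\rho$ for every $\si\in\Si_\rho^r$. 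So the whole content is the strict inequality.

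To get strictness I would argue by contradiction: suppose there is a sequence of radial paths $\si_n\in\Si_\rho^r$ with $\max_{t\in[0,1]}F(\si_n(t))\to m_\rho$. Apply the Ekeland-type Lemma \ref{lemma-ekeland} (or rather its radial version, working on $M=S_\rho^r\times\R$ with $\wt F_\infty(u,h)=F_\infty(h\star u)$ and $K=\{(h_1\star Z_\rho,0),(h_2\star Z_\rho,0)\}$, exactly as in the proof of Lemma \ref{C-m-ineq}) to produce a radial Palais--Smale sequence $v_n\in S_\rho^r$ for $F_\infty$ at level $m_\rho$ which in addition asymptotically satisfies the Pohozaev identity, so that $v_n$ is bounded in $H^1$ and the Lagrange multipliers $\mu_n$ stay away from $0$; up to a subsequence $\mu_n\to\mu>0$ and $v_n\weakto v$ weakly in $H^1_{rad}(\R^N)$ with $-\Delta v+\mu v=|v|^{p-2}v$. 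By the compactness of the radial embedding into $L^p$, $\|v_n\|_p\to\|v\|_p$, and combined with the boundedness, the Pohozaev/energy relations force $v\neq0$ and in fact $v_n\to v$ strongly in $H^1_{rad}(\R^N)$, so $\|v\|_2=\rho$; moreover the energy level forces $F_\infty(v)=m_\rho$, hence by the characterization of $Z_\rho$ in Appendix \ref{app:a} one has $v=Z_\rho$ (up to the sign, and here $v\ge0$). Then, just as at the end of the proof of Lemma \ref{C-m-ineq},
\[
  F(v_n)=F_\infty(v_n)+\frac12\int_{\R^N}V(x)v_n^2\,dx\to m_\rho+\frac12\int_{\R^N}V(x)Z_\rho^2\,dx>m_\rho
\]
(using $V\ge0$, $V\not\equiv0$, and $Z_\rho>0$), which contradicts $\max_t F(\si_n(t))\to m_\rho$ since $F(v_n)$ is controlled by (a subsequence of) these maxima up to $o(1)$ by property (3) of Lemma \ref{lemma-ekeland}.

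The main obstacle I anticipate is not the compactness — that is exactly where the radial class pays off, so no barycenter map and no splitting lemma are needed here — but rather making sure the Ekeland sequence $v_n$ genuinely inherits the asymptotic Pohozaev identity and genuinely stays close to the approximate-minimizing paths $\si_n$, so that one really gets $F(v_n)\le m_\rho+o(1)$ from the hypothesis while simultaneously $F(v_n)\to m_\rho+\frac12\int V Z_\rho^2$ from the convergence analysis. This is handled by the same device as in Lemma \ref{C-m-ineq}: lift to $M=S_\rho^r\times\R$, note that $\wt\ell_\rho=\ell_\rho^r=m_\rho$ in the radial class (the one-parameter family $\{h\star Z_\rho\}$ realizes $m_\rho$ and lies in $\cD_r$), apply Lemma \ref{lemma-ekeland} with $\cC$ the homotopy-stable family generated by (images of) the $\si_n$, and push the resulting $(u_n,h_n)$ forward by $v_n:=h_n\star u_n$; differentiating $\wt F_\infty$ in $h$ at $(u_n,h_n)$ yields the approximate Pohozaev identity and $h_n\to0$ yields closeness to $\si_n$. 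Since $N\ge2$ under either $(V^{rad}_1)$ or $(V^{rad}_2)$, there are no further dimensional restrictions to worry about, and the bounds \eqref{eq:V_1}, \eqref{eq:V_2} play no role in this particular lemma (they enter only later, in the convergence of the Palais--Smale sequence at level $m_{V,\rho}^r$).
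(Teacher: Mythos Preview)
Your proposal is correct and follows essentially the same route as the paper: assume by contradiction that $m_{V,\rho}^r=m_\rho$, use the Ekeland-type deformation (on $S_\rho^r\times\R$ with the lifted functional) along an almost-minimizing sequence of radial paths to produce a bounded radial Palais--Smale sequence for $F_\infty$ at level $m_\rho$, exploit the compact embedding $H^1_{rad}\hookrightarrow L^p$ to get strong convergence to $Z_\rho$, and then reach a contradiction via $F(v_n)\to m_\rho+\tfrac12\int V Z_\rho^2>m_\rho$ together with $\dist(v_n,\si_n([0,1]))\to0$. The paper's proof is terser (it bundles the convergence $v_n\to Z_\rho$ into a single sentence referring back to Proposition \ref{prop-Ekeland}) and establishes $m_\rho^r=m_\rho$ via radial rearrangement rather than your inclusion argument, but the substance is identical.
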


\begin{proof}
Using the fact that $F_\infty(u^\star)\le F_\infty(u)$ if $u^\star$ is the radially decreasing rearrangement of $u\in H^1(\R^N)$, we have
$$m_\rho=m_\rho^r:=\inf_{\si\in\Si_\rho^r}\max_{t\in[0,1]}F_\infty(\si(t)),$$
so that it is enough to prove $m_{V,\rho}^r>m_\rho^r$. Clearly $m_{V,\rho}^r\ge m_\rho^r$ because $V\ge 0$. If we assume equality, then there would exist a sequence $\si_n\in\Si_\rho^r$ such that
$$0\le\max_{t\in[0,1]}F(\si_n(t))-m^r_\rho\le\frac{1}{n}.$$
As a consequence, using Proposition \ref{prop-Ekeland}, it would be possible to construct a Palais-Smale sequence $v_n\in H^1_r(\R^N)$ such that $v_n\to Z_\rho$ strongly in $H^1(\R^N)$ and
$$\dist(v_n,\si_n([0,1]))=\|v_n-\si_n(\bar{t}_n)\|\to 0,$$
for some $\bar{t}_n\in[0,1]$. Therefore
\begin{equation}\notag
\begin{aligned}
\max_{t\in[0,1]}F(\si_n(t))\ge F(\si_n(\bar{t}_n))
  &= F(v_n)+o(1) = F_\infty(v_n)+\frac{1}{2}\int_{\R^N}V v_n^2 dx+o(1)\\
  &\to m_\rho+\frac{1}{2}\int_{\R^N}V Z_\rho^2 dx,
\end{aligned}
\end{equation}
a contradiction.
\end{proof}

The proof of Proposition \ref{prop-MP-geom} holds if $V$ satisfies $(V^{rad}_1)$ or $(V^{rad}_2)$ because we did not use \eqref{eq:newass}, \eqref{eq:newass2}. Therefore we obtain a bounded sequence $v_n\in H^1_r(\R^N)$ such that
\[
F(v_n)\to m_{V,\rho}^r \qquad\text{and}\qquad \nabla_{S_\rho\cap H^1_{rad}(\R^N)} F (v_n)\to 0.
\]
Moreover, the Lagrange multipliers
$$\la_n:=-\frac{DF(v_n)[v_n]}{\rho^2}$$
admit a subsequence converging to some $\la>0$. (In the proof of Proposition \ref{prop:bddPSseq}, the assumptions \eqref{eq:newass}, \eqref{eq:newass2} were used only to prove the bound from above of $\lambda$, which was crucial in the estimates after the splitting lemma but this is not needed here.)

As a consequence there exists a subsequence of $v_n$ converging weakly in $H^1(\R^N)$ to some weak solution $v\in H^1(\R^N)$ of
$$-\Delta v+(V+\la)v=|v|^{p-2}v \qquad\text{in $H^1_{rad}(\R^N)$}$$
with $\|v\|_2\le \rho^2$. The compactness of the Sobolev embedding $H^1_{rad}(\R^N)\hookrightarrow L^{p}(\R^N)$ for $p\in(2,2^*)$ implies $v_n\to v$ in $L^{p}(\R^N)$ strongly. As a consequence, using $\la>0$, we have
$$v_n:=(-\Delta+\la+V)^{-1}(|v_n|^{p-2}v_n+(\la-\la_n)v_n)\to v$$
in $H^1(\R^N)$ strongly, so that $\|v\|_2=\rho^2$.

\appendix

\section{Relations for solutions of the autonomous equation.}\label{app:a}

Let $a>0$ and $w\in H^1(\R^N)$ solve 
\[
-\Delta w+ a w=|w|^{p-2}w.
\] 
We collect here for the reader's convenience some well known relations that we use throughout the paper, starting from equations \eqref{eq:lambda_rho} and \eqref{eq:poho}:
\begin{itemize}
\item we have
\[
\begin{split}
\|\nabla w\|_2^2 + a \|w\|_2^2 = \|w\|_p^p,\qquad
\frac{N-2}{2}\|\nabla w\|_2^2 + \frac{N}{2} a \|w\|_2^2 = \frac{N}{p} \|w\|_p^p
\end{split}
\]
(the second one is the Pohozaev identity). This yields
\[
a\|w\|_2^2 = \frac{2N - p(N-2)}{p-2} I_{\infty,a}(w) = \frac{4N - 2p(N-2)}{N(p-2)-4} F_{\infty}(w).
\]
\item let $\rho$ be such that $\lambda_\rho = a$, and $Z_\rho$ be defined as usual. Then
\[
-\Delta Z_\rho + a Z_\rho=Z_\rho^{p-1}
\]
as well, and it satisfies the same relations than $w$. Moreover, for every $u\in H^1(\R^N)$, $u\not \equiv0$,
\[
I_{\infty,a}(Z_\rho) \le \max_{t>0}  I_{\infty,a}(tu)
\]
Since $\max_{t>0}  I_{\infty,a}(tw) = I_{\infty,a}(w)$, we deduce
\[
I_{\infty,a}(w) \ge I_{\infty,a}(Z_\rho),\qquad
\|w\|_2 \ge \|Z_\rho\|_2,\qquad
F_{\infty}(w) \ge F_{\infty}(Z_\rho).
\]
\end{itemize}


\end{document}